\newtheorem{theorem}{Theorem}[section]
\newtheorem{lemma}[theorem]{Lemma}
\newtheorem{corollary}[theorem]{Corollary}
\theoremstyle{definition}
\newtheorem{definition}[theorem]{Definition}
\theoremstyle{remark}
\newtheorem{remark}[theorem]{Remark}
\DeclareMathOperator{\rp}{\mathbb R P}
\DeclareMathOperator{\R}{\mathbb R}
\DeclareMathOperator{\pgl}{PGL}
\DeclareMathOperator{\vol}{Vol}
\DeclareMathOperator{\tr}{tr}
\DeclareMathOperator{\bary}{bar}
\DeclareMathOperator{\ent}{ent}
\DeclareMathOperator{\jac}{Jac}
\DeclareMathOperator{\vspan}{span}
\numberwithin{equation}{section}
\begin{document}

\title{Entropy rigidity and Hilbert volume}

\author{Ilesanmi Adeboye}
\address{Department of Math \& Comp. Sci., Wesleyan University, Middletown, CT 06459}
\email{iadeboye@wesleyan.edu}

\author{Harrison Bray}
\address{Department of Mathematics, University of Michigan, Ann Arbor, MI 48109}
\email{brays@umich.edu}

\author{David Constantine}
\address{Department of Math \& Comp. Sci., Wesleyan University, Middletown, CT 06459}
\email{dconstantine@wesleyan.edu}

\subjclass[2010]{57M50, 53A20, 37B40, 53C24}

\keywords{}

\date{\today}

\dedicatory{}

\begin{abstract} 
For a closed, strictly convex projective manifold of dimension $n\geq 3$ that admits a hyperbolic structure, we show that the ratio of Hilbert volume to hyperbolic volume is bounded below by a constant that depends only on dimension. We also show that for such spaces, if topological entropy of the geodesic flow goes to zero, the volume must go to infinity. These results follow from adapting Besson--Courtois--Gallot's entropy rigidity result to Hilbert geometries.
\end{abstract}

\maketitle

\thispagestyle{empty}

\section{Introduction}\label{sec:intro} 

A\emph{(strictly) convex real projective orbifold} is a quotient $\Omega/\Gamma$, where $\Omega$ is an open, properly (strictly) convex subset of $\rp^n$  and $\Gamma<\pgl(n+1,\R)$ is a discrete subgroup of projective transformations that preserves $\Omega$. A subset $\Omega\subset\rp^n$ is \emph{proper} if it is bounded in some affine patch; \emph{convex} if its intersection with any projective line is connected; and \emph{strictly convex} if, moreover, its topological boundary in an affine patch does not contain an open line segment. An orbifold  is a \emph{manifold} if $\Gamma$ contains no elements of finite order.

Any properly convex set $\Omega$ admits a complete Finsler metric called the \emph{Hilbert metric}. This is the Klein model of the hyperbolic metric when $\Omega$ is the interior of a round ball. Hence, the first examples of projective orbifolds are hyperbolic orbifolds. By Mostow rigidity, a hyperbolic structure on a closed manifold of dimension greater than or equal to 3 is unique, up to isometry. However, the dimension of the deformation space of strictly convex projective structures on some closed manifolds can be large. For instance, in dimension two, there is a $16g-16$ dimensional deformation space of strictly convex real projective structures on a closed surface of genus $g$ \cite{goldman90}. This space need not contain a hyperbolic point -- there exist closed strictly convex manifolds that do not admit a hyperbolic structure.

It is of interest to characterize a hyperbolic structure, when it exists, among all strictly convex projective structures a manifold admits. This article addresses that question in terms of volume and entropy and derives a pair of results on the geometry and dynamics of these spaces. The  Finsler structure on $\Omega$ provides a natural volume form on $Y$ referred to as \emph{Hilbert volume}. Let $\vol(Y,g_0)$ and $\vol(Y,F_{\Omega})$ denote the hyperbolic volume and the Hilbert volume, respectively, of a closed manifold $Y$. Our main result on Hilbert volume is given below.

\begin{theorem}\label{Main}
Let $Y$ be a closed strictly convex projective manifold of dimension $n\geq 3$ which admits a hyperbolic structure. Then there exists a constant $\mathcal D>0$, depending only on dimension, such that
\[ \frac{\vol(Y,F_{\Omega})}{\vol(Y,g_0)}\geq\mathcal D. \]
\end{theorem}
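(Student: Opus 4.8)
The plan is to transcribe the Besson--Courtois--Gallot natural map (``barycenter'') construction to the mixed Riemannian/Finsler setting at hand, with the hyperbolic metric $g_0$ as \emph{target} and the Hilbert metric $F_\Omega$ as \emph{source}, and then to feed in the bound $h\leq n-1$ for the volume entropy $h$ of a closed strictly convex projective manifold (Crampon). Transport the hyperbolic structure to the underlying manifold, so that $\Omega$ and $\mathbb H^n$ are both universal covers of $Y$ with common deck group $\Gamma=\pi_1(Y)$; since $\Gamma$ acts cocompactly on $(\Omega,F_\Omega)$, its critical exponent equals $h$, so for $c>h$ the Poincar\'e series $\sum_{\gamma\in\Gamma}e^{-c\,d_{F_\Omega}(x,\gamma o)}$ converges (here $o\in\Omega$ is a basepoint). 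Fixing also $\tilde o\in\mathbb H^n$ and writing $\nu_z$ for the visual probability measure on $\partial\mathbb H^n=S^{n-1}$ seen from $z\in\mathbb H^n$, set
\[ \mu^c_x\;:=\;\sum_{\gamma\in\Gamma}e^{-c\,d_{F_\Omega}(x,\gamma o)}\,\nu_{\gamma\tilde o}, \]
a finite, $\Gamma$-equivariant family of full-support measures on $S^{n-1}$. As $\Gamma$ is non-elementary, $y\mapsto\int_{S^{n-1}}B(\xi,y)\,d\mu^c_x(\xi)$ (with $B(\xi,\cdot)$ the hyperbolic Busemann function, normalized at $\tilde o$) is proper and strictly convex, hence has a unique minimum $\Phi_c(x)\in\mathbb H^n$. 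The map $\Phi_c$ is $\Gamma$-equivariant, so descends to $\bar\Phi_c\colon Y\to Y$; after mollifying the functions $d_{F_\Omega}(\cdot,\gamma o)$ (the Hilbert metric is in general only a continuous Finsler metric) we may assume $\Phi_c$ is $C^1$. Since $\Phi_c$ is equivariant for the identity automorphism of $\Gamma$ and $Y$ is aspherical ($\Omega$ is convex, hence contractible), $\bar\Phi_c$ is homotopic to $\mathrm{id}_Y$, so $\deg\bar\Phi_c=1$.

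Next, the Jacobian bound. Differentiating the barycenter equation $\int\nabla_yB(\xi,\Phi_c(x))\,d\mu^c_x(\xi)=0$ in a direction $v\in T_x\Omega$, and using $|\partial_v d_{F_\Omega}(x,\gamma o)|\leq F_\Omega(v)$ together with the hyperbolic identity $\mathrm{Hess}_yB(\xi,y)=g_0-dB\otimes dB$, one gets at $y=\Phi_c(x)$, writing $\bar\mu^c_x$ for the normalization of $\mu^c_x$, $\bar K:=\int dB\otimes dB\,d\bar\mu^c_x$ (so $\tr_{g_0}\bar K=1$) and $\bar H:=g_0-\bar K>0$:
\[ \bar H\big(d\Phi_c(v),w\big)=c\,G(v,w),\qquad |G(v,w)|\leq F_\Omega(v)\,\bar K(w,w)^{1/2}\quad(w\in T_y\mathbb H^n). \]
Let $g_E$ be the John inner product of $\{F_\Omega\leq1\}\subset T_x\Omega$ (so $F_\Omega\leq|\cdot|_{g_E}\leq\sqrt n\,F_\Omega$) and let $\{e_i\}$ be a $g_E$-orthonormal frame, so $F_\Omega(e_i)\leq1$. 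In a common $g_0$-orthonormal eigenbasis $\{f_j\}$ of $\bar K$ with eigenvalues $\lambda_j\in(0,1)$, $\sum_j\lambda_j=1$, the displayed inequality forces the matrix $M=(\langle d\Phi_c(e_i),f_j\rangle_{g_0})$ to satisfy $\sum_j\tfrac{(1-\lambda_j)^2}{\lambda_j}M_{ij}^2\leq c^2$ for each $i$, whence by Hadamard's inequality
\[ \big|\Phi_c^*\,d\vol_{g_0}(e_1,\dots,e_n)\big|=|\det M|\leq c^n\prod_j\frac{\sqrt{\lambda_j}}{1-\lambda_j}\leq C(n)\,c^n, \]
with $C(n):=\sup\{\prod_j\sqrt{\lambda_j}/(1-\lambda_j):\lambda_j\geq0,\ \sum_j\lambda_j=1\}$, which is finite because $n\geq3$ (it is infinite for $n=2$). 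Finally $g_E$-orthonormality and John's theorem give $d\vol_{F_\Omega}(e_1,\dots,e_n)\geq n^{-n/2}$ for the Busemann--Hausdorff volume (and an analogous lower bound for the Holmes--Thompson volume), so we obtain the pointwise density inequality $|\bar\Phi_c^*\,d\vol_{g_0}|\leq C(n)\,n^{n/2}\,c^n\,d\vol_{F_\Omega}$ on $Y$.

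Since $\deg\bar\Phi_c=1$ (after passing to the orientation cover if necessary),
\[ \vol(Y,g_0)=\Big|\int_Y\bar\Phi_c^*\,d\vol_{g_0}\Big|\leq\int_Y\big|\bar\Phi_c^*\,d\vol_{g_0}\big|\leq C(n)\,n^{n/2}\,c^n\,\vol(Y,F_\Omega); \]
letting $c\downarrow h$ and using $h\leq n-1$ gives $\vol(Y,g_0)\leq C(n)\,n^{n/2}(n-1)^n\,\vol(Y,F_\Omega)$, i.e.\ the theorem with $\mathcal D=\big(C(n)\,n^{n/2}(n-1)^n\big)^{-1}$. I expect the principal difficulty to lie not in the strategy but in the execution of the first two steps: making the barycenter construction and, above all, the differentiation of the barycenter equation rigorous for the merely continuous Finsler metric $F_\Omega$ (via mollification and the usual properness and continuity arguments of the barycenter method), and dealing with the one point that is genuinely new relative to BCG --- that Hilbert volume is not a product of Finsler lengths --- which is precisely where the John-ellipsoid comparison enters, both to bound the $F_\Omega(e_i)$ from above and $d\vol_{F_\Omega}(e_1,\dots,e_n)$ from below.
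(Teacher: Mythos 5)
Your argument is correct in outline and rests on the same engine as the paper's: a Besson--Courtois--Gallot barycenter map from the Hilbert universal cover to $\mathbb{H}^n$, the Jacobian bound via the $H$/$K$ endomorphisms and the Appendice~B optimization (finite precisely for $n\geq 3$), a degree argument, and Crampon's bound $h(F_\Omega)\leq n-1$. But you deviate from the paper in two genuine ways. First, the paper does not prove Theorem~\ref{Main} directly; it first establishes the entropy-rigidity inequality $N(F_\Omega)\ent(Y,F_\Omega)\geq\ent(X,g_0)$, where the eccentricity factor $N(F_\Omega)$ is measured against a global reference Riemannian metric $g_r$, and then controls $N(F_\Omega)\leq K_n^{2n}$ by taking $g_r$ to be the Blaschke metric and invoking Benoist--Hulin's uniform comparison. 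You instead absorb the Finsler/Riemannian discrepancy pointwise via the John ellipsoid of $\{F_\Omega\leq 1\}\subset T_y\Omega$, which yields the explicit constant $\mathcal D=n^{-n}$ rather than the paper's non-explicit $K_n^{-2n}$; this buys you independence from the Benoist--Hulin machinery entirely (at the cost of losing the canonical, projectively invariant reference metric that the paper also exploits elsewhere). Second, you smear visual measures over the orbit with a supercritical Poincar\'e series and let $c\downarrow h$, whereas the paper uses Patterson--Sullivan densities at the critical exponent pushed forward by the boundary map $f:\partial\tilde Y\to\partial\tilde X$; your variant avoids having to set up Patterson--Sullivan theory on non-Riemannian Hilbert geometries, which is a genuine simplification for the inequality alone (it would not suffice for the equality/rigidity statement of Theorem~\ref{thm:rigidity}, but Theorem~\ref{Main} does not need that).

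One step you should tighten: the appeal to ``mollification'' of $d_{F_\Omega}(\cdot,\gamma o)$. Mollifying a convex combination inside the barycenter equation is delicate (you must preserve both the $1$-Lipschitz bound $|\partial_v d_{F_\Omega}(x,\gamma o)|\leq F_\Omega(v)$ and the equivariance), and it is also unnecessary: by Benoist, $\partial\Omega$ is $C^{1+\alpha}$ for strictly convex divisible $\Omega$, so the distance and Busemann functions are already $C^1$ in $x$, and differentiability of $\Phi_c$ follows from the implicit function theorem applied to the barycenter equation exactly as in Section~\ref{sec:Jacobian} (the relevant partial derivative in the target variable is your $\bar H>0$). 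You should also record that the John inner product $g_E(y)$ varies continuously in $y$ (the John ellipsoid depends continuously on the convex body), so that your $g_E$-orthonormal frames and the volume comparison $d\vol_{F_\Omega}(e_1,\dots,e_n)\geq n^{-n/2}$ integrate legitimately over $Y$. With those two points made precise, the proof is complete.
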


A consequence of the Margulis lemma \cite{Ballman1} is that there exists a positive lower bound for the volume of a hyperbolic $n$-manifold, for each $n$. This gives the following corollary.

\begin{corollary} 
Let $Y$ be a closed strictly convex projective manifold of dimension $n\geq 3$ which admits a hyperbolic structure. Then, there exists a constant $\mathfrak e>0$, depending only on dimension, such that 
\[ \vol(Y,F_{\Omega})\geq\mathfrak{e}.\]
\end{corollary}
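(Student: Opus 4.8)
The plan is to feed Theorem~\ref{Main} into the well-known volume estimate for hyperbolic manifolds recalled just above. Since $n\ge 3$ and $Y$ admits a hyperbolic metric $g_0$, Mostow rigidity makes $g_0$ unique up to isometry, so $\vol(Y,g_0)$ is a genuine invariant of $Y$ and the hypotheses of Theorem~\ref{Main} hold as stated.

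First I would record the input from the Margulis lemma. As noted above, it supplies a Margulis constant $\varepsilon_n>0$ depending only on $n$, together with the thick--thin decomposition of any complete hyperbolic $n$-manifold. For a closed hyperbolic manifold $(Y,g_0)$ the thin part is a finite disjoint union of Margulis tubes, which have boundary, so it cannot be all of $Y$; hence some $p\in Y$ has injectivity radius at least $\varepsilon_n/2$, and $(Y,g_0)$ contains an embedded metric ball of that radius. Therefore
\[ \vol(Y,g_0)\ \ge\ \vol_{\mathbb H^n}\!\bigl(B(\varepsilon_n/2)\bigr)\ =:\ \mu_n, \]
a constant depending only on $n$. (This is exactly the lower bound cited via \cite{Ballman1}; it is also a special case of the Kazhdan--Margulis theorem.)

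Finally I would chain the two inequalities: by Theorem~\ref{Main},
\[ \vol(Y,F_{\Omega})\ \ge\ \mathcal D\cdot\vol(Y,g_0)\ \ge\ \mathcal D\cdot\mu_n, \]
so the corollary holds with $\mathfrak e:=\mathcal D\,\mu_n$, which is positive and depends only on dimension because $\mathcal D$ and $\mu_n$ do. There is no real obstacle here: all of the substance is carried by Theorem~\ref{Main}, and the only other ingredient---a lower bound on hyperbolic volume depending on dimension alone---is classical; one need only check that every constant entering the chain is controlled by $n$, which it is.
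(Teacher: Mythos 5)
Your argument is correct and is exactly the paper's route: Theorem~\ref{Main} combined with the Margulis-lemma lower bound on the volume of a closed hyperbolic $n$-manifold, yielding $\mathfrak e = \mathcal D\,\mu_n$. The paper states this in one sentence; you have merely spelled out the standard thick--thin justification of the hyperbolic volume bound.
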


Results bounding Hilbert area in dimension 2 can be found in \cite{CVV} and \cite{adeboye_cooper}.

It is important to underscore that there are strictly convex real projective manifolds which do not admit a hyperbolic metric. Coxeter group examples exist in dimension four \cite{Ben5} and Gromov-Thurston examples exist for each dimension greater than three \cite{Kapovich2007}. However, our theorem holds in several contexts.


Every closed strictly convex projective 3-manifold admits a hyperbolic structure; by Benoist's dichtomoy \cite[Theorem 1.1]{Ben1}, strict convexity of $\Omega$ is equivalent to Gromov hyperbolicity of $\Gamma$ which implies the quotient admits a hyperbolic structure by geometrization. There are also examples of \textit{flexible} closed hyperbolic 3-manifolds that have nontrivial projective deformations \cite{CooperLongThistlethwaite2008}. In the same work, Cooper--Long--Thistlethwaite conjecture that all hyperbolic 3-manifolds are virtually flexible. Furthermore, any hyperbolic $n$-manifold with a totally geodesic submanifold admits the projective bending deformation of Thurston \cite{johnsonmillson}.

Our second main result concerns the dynamics of the geodesic flow for $Y=\Omega/\Gamma$.

\begin{definition} 
Let $g$ be a (Finsler or Riemannian) metric on a compact manifold $Y$. Let $y\in\tilde{Y}$ be any point in the universal cover of $Y$ and $B_g(R,y)$ the radius $R$ ball around $y$ with respect to $g$. The \emph{volume growth entropy} of $g$ is, 
\[ h(g)=\lim_{R\to\infty}\frac{1}{R}\log(\vol B_g(R,y)). \] 
\end{definition}

For a nonpositively curved Riemannian metric, the volume growth entropy is equal to the topological entropy for the geodesic flow \cite{manning}. This result can be generalized to non-Riemannian settings under some mild conditions mimicking nonpositive curvature (see \cite{leuzinger}). Verification of these conditions in the present setting can be found in \cite[\S 8]{crampon09}.

We prove the following relationship between this dynamical quantity and the Hilbert volume:

\begin{theorem}\label{thm:entropytozero}
Let $Y_t= \Omega_t/\Gamma_t$ be a family of strictly convex real projective structures on a manifold of dimension at least 2 which supports a hyperbolic metric. Then 
\[h(F_{\Omega_t})\to 0 \ \Rightarrow \ \vol(Y_t,F_{\Omega_t})\to\infty.\] 
\end{theorem}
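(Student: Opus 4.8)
The plan is to deduce Theorem~\ref{thm:entropytozero} from an entropy--volume inequality of Besson--Courtois--Gallot type, which is the common engine behind both of the paper's main results. Write $Y$ for the fixed underlying smooth manifold, $g_0$ for a hyperbolic metric it carries, and mark each $Y_t=\Omega_t/\Gamma_t$ so that $\Gamma_t$ is identified with $\Gamma_0=\pi_1(Y)$, the latter acting on $\mathbb H^n$ by the holonomy of $g_0$. The key claim $(\star)$ is that there is a constant $c(n)>0$, depending only on $n=\dim Y$, with
\[
h(F_{\Omega_t})^{\,n}\,\vol(Y_t,F_{\Omega_t})\ \geq\ c(n)\,(n-1)^{n}\,\vol(Y,g_0),
\]
the factor $(n-1)^n$ being the volume growth entropy of the rank-one symmetric metric $g_0$. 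Granting $(\star)$, the theorem is immediate: $\vol(Y,g_0)$ is a positive constant $V_0$ independent of $t$ (canonically so when $n\geq 3$ by Mostow rigidity, and equal to $2\pi|\chi(Y)|$ when $n=2$), so $\vol(Y_t,F_{\Omega_t})\geq c(n)(n-1)^n V_0/h(F_{\Omega_t})^n$, which tends to $\infty$ as $h(F_{\Omega_t})\to 0$. The same inequality $(\star)$ together with Crampon's upper bound $h(F_{\Omega_t})\leq n-1$ \cite{crampon09} recovers Theorem~\ref{Main} with $\mathcal D=c(n)$.

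To prove $(\star)$ I would run the barycenter / natural-map method of Besson--Courtois--Gallot, adapted to the Hilbert setting. Since $\Omega_t$ is properly strictly convex, its Hilbert geometry is Gromov hyperbolic with Gromov boundary $\partial\Omega_t\cong S^{n-1}$, the geodesic flow is Anosov, the volume growth entropy $h(F_{\Omega_t})$ is finite and positive, and there is a Patterson--Sullivan density $\{\mu_x\}_{x\in\Omega_t}$ on $\partial\Omega_t$ of dimension $h(F_{\Omega_t})$; these facts are available from \cite{crampon09}. Using the boundary homeomorphism $\partial\Omega_t\to\partial\mathbb H^n=S^{n-1}$ induced by a $\Gamma_0$-equivariant quasi-isometry of orbits, push the normalized measures $\mu_x$ to measures $\nu_x$ on $S^{n-1}$ and define the natural map $\Psi_t\colon\Omega_t\to\mathbb H^n$ by declaring $\Psi_t(x)$ to be the Busemann barycenter of $\nu_x$, i.e.\ the unique minimizer of $y\mapsto\int_{S^{n-1}}\mathcal B_\theta(y)\,d\nu_x(\theta)$. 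By construction $\Psi_t$ is $\Gamma_0$-equivariant, so it descends to a map $Y_t\to Y$ homotopic to the marking, hence of degree one; the change-of-variables/degree formula then gives $\vol(Y,g_0)=\int_{Y_t}(\jac\Psi_t)\,d\vol_{F_{\Omega_t}}\leq\big(\sup_{Y_t}|\jac\Psi_t|\big)\,\vol(Y_t,F_{\Omega_t})$.

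The heart of the matter is the pointwise bound $|\jac\Psi_t(x)|\leq c(n)^{-1}\big(h(F_{\Omega_t})/(n-1)\big)^{n}$, and this is exactly where the Riemannian argument must be reworked. Differentiating the implicit equation defining the barycenter expresses $d\Psi_t$ in terms of the two symmetric forms $\int_{S^{n-1}} \nabla\mathcal B_\theta\otimes\nabla\mathcal B_\theta\,d\nu_x$ and $\int_{S^{n-1}}\mathrm{Hess}\,\mathcal B_\theta\,d\nu_x$ on $\mathbb H^n$, and bounding the determinant rests on the algebraic / Cauchy--Schwarz inequalities of Besson--Courtois--Gallot, with the Patterson--Sullivan dimension supplying the exponent $h(F_{\Omega_t})$. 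The new feature is that the volume form on $\Omega_t$ appearing in the change of variables is the Hilbert (Busemann) volume rather than a Riemannian one; since the Hilbert unit ball is a general symmetric convex body, one does not get the sharp Riemannian constant but rather absorbs the discrepancy between that body and its John ellipsoid into the dimensional constant $c(n)\leq 1$ (cf.\ the convexity estimates in \cite{crampon09}). Verifying this Jacobian bound---and checking that the barycenter is well defined, smooth, and equivariant, which requires sufficient regularity of the Hilbert Busemann functions on $\Omega_t$---is the step I expect to be the main obstacle; once it is in hand, $(\star)$ and hence Theorem~\ref{thm:entropytozero} follow as above.
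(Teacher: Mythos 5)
Your reduction of the theorem to the inequality $(\star)$ is the right idea, and for $n\geq 3$ your outline matches the paper's: there the constant $c(n)$ is $1/N(F_{\Omega})$, the reciprocal of an eccentricity factor measuring how far the Hilbert unit balls are from ellipsoids, which is bounded by a dimensional constant $K_n^{2n}$ using Benoist--Hulin's comparison between the Hilbert and Blaschke metrics (your John-ellipsoid absorption is a plausible alternative way to control the same quantity). However, there is a genuine gap: you claim to prove $(\star)$ for all $n\geq 2$ by the barycenter method, and that method fails outright in dimension $2$. The pointwise Jacobian bound rests on the Besson--Courtois--Gallot linear-algebra estimate $\det(H)^{1/2}/|\det(I-H)|\leq(\sqrt n/(n-1))^n$ for symmetric $H$ with $\tr H=1$ (the paper's Lemma \ref{lem:lin alg lemma}), and for $n=2$ this ratio equals $(t(1-t))^{-1/2}$ for eigenvalues $t,1-t$, which is unbounded. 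So no constant $c(2)$ can come out of your argument, and the entire chain from the natural map to $(\star)$ collapses for surfaces. This is precisely why the paper restricts the barycenter argument to $n\geq 3$ and treats $n=2$ by a completely different route: it passes to the Blaschke metric, which by Benoist--Hulin has curvature in $[-1,0]$ and is uniformly bi-Lipschitz to the Hilbert metric, and then invokes Katok's entropy rigidity theorem for surfaces to get $h^2\vol\geq 2\pi|\chi(\Sigma)|$, transferring the conclusion back to the Hilbert metric. Your statement $(\star)$ happens to be true for $n=2$, but only via such an independent two-dimensional argument, which your proposal does not supply. (Your flagged concerns about regularity of the Busemann functions and differentiability of the barycenter map are legitimate but resolvable, as in the paper, via the $C^{1+\alpha}$ regularity of $\partial\Omega$ and the implicit function theorem; the dimension-$2$ issue is the substantive missing piece.)
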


\begin{remark} 
Examples of manifolds in dimensions 2, 3, and 4, for which the entropy of strictly convex projective structures goes to zero are given in \cite{nie}. In these cases, volume is known to grow without bound as entropy decreases. Moreover, in dimension 2, Zhang proves the entropy of strictly convex real projective structures on any closed surface can be made arbitrarily small \cite{zhang15}. Though it is plausible the work of Zhang shows the volume will simultaneously diverge to infinity, it is not immediate. Theorem \ref{thm:entropytozero} states that this phenomenon will hold generally in any dimension with a short proof. 
\end{remark}

Theorem \ref{Main} and the $n\geq 3$ case of Theorem \ref{thm:entropytozero} are consequences of an entropy rigidity theorem. This theorem follows a line of results beginning with the celebrated work of Besson, Courtois and Gallot in \cite{BCG1, BCG2}. They prove the following theorem using the `barycenter method' -- the technique we will also use.

\begin{definition} 
The \emph{normalized entropy functional} of $(Y^n,g)$ is the quantity \[\ent(Y,g)=h(g)^n\vol(Y,g).\]
\end{definition}

\begin{theorem}[see Th\'eor\`eme Principal \cite{BCG1}]\label{thm:BCG}
If $(Y,g)$ is a compact, oriented, Riemannian manifold of dimension $n\geq 3$ homotopy equivalent to a negatively curved locally symmetric space $(X,g_0)$, then 
\[ \ent(Y,g)\geq \ent(X, g_0) \]
with equality if and only if $(Y,g)$ and $(X,g_0)$ are isometric, up to a homothety.
\end{theorem}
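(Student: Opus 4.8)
The plan is to adapt the \emph{barycenter method} of Besson--Courtois--Gallot \cite{BCG1}. Write $\tilde Y$ and $\tilde X$ for the universal covers and identify $\Gamma := \pi_1(Y) \cong \pi_1(X)$ acting by deck transformations; fix a $\Gamma$-equivariant lift $\tilde f\colon\tilde Y\to\tilde X$ of the homotopy equivalence $f$. Because $(X,g_0)$ is negatively curved, its visual boundary $\partial\tilde X$ carries a $\Gamma$-equivariant conformal density $\{\mu_p\}_{p\in\tilde X}$ of dimension $h_0:=h(g_0)$ (the Patterson--Sullivan measures), and every atomless finite measure $\mu$ on $\partial\tilde X$ has a unique \emph{barycenter} $\bary(\mu)\in\tilde X$, namely the unique minimizer of $x\mapsto\int_{\partial\tilde X}B_\theta(x)\,d\mu(\theta)$, where $B_\theta$ is the Busemann function centered at $\theta$. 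For each $c>h(g)$ the integral $N_c(y):=\int_{\tilde Y}e^{-c\,d_{\tilde g}(y,z)}\,d\vol_{\tilde g}(z)$ is finite, since $h(g)$ is the critical exponent of this integral, so I would set
\[ \mu_y^c:=\frac{1}{N_c(y)}\int_{\tilde Y}e^{-c\,d_{\tilde g}(y,z)}\,\mu_{\tilde f(z)}\,d\vol_{\tilde g}(z) \]
and define the natural map $F_c(y):=\bary(\mu_y^c)$, which is $C^1$ by the implicit function theorem. Equivariance of $\tilde f$ and of $\{\mu_p\}$ makes $F_c$ $\Gamma$-equivariant; joining $\tilde f$ to $F_c$ along geodesics of the nonpositively curved space $\tilde X$ gives an equivariant homotopy, so the descended map $\bar F_c\colon Y\to X$ is homotopic to $f$ and hence of degree $\pm1$.

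Next I would run the volume comparison. By the degree formula together with the general bound of a pulled-back volume form by the Jacobian,
\[ \vol(X,g_0)=\Big|\int_Y\bar F_c^{\,*}\,d\vol_{g_0}\Big|\le\int_Y\big|\jac_y\bar F_c\big|\,d\vol_g(y), \]
where the Jacobian is taken with respect to $g$ on the source and $g_0$ on the target. The crux is the pointwise estimate $\big|\jac_y F_c\big|\le\big(\tfrac{c}{h_0}\big)^n$ when $X$ is real hyperbolic, with an analogous sharp bound for the remaining rank one symmetric spaces. To prove it one differentiates the first-order condition $\int_{\partial\tilde X}dB_\theta|_{F_c(y)}\,d\mu_y^c(\theta)=0$ defining the barycenter: $dF_c$ is then controlled by the symmetric positive definite endomorphism $H_y=\int d^2B_\theta\,d\mu_y^c$ and, via Cauchy--Schwarz applied to the $y$-derivative of $\mu_y^c$, by $c$ times a second endomorphism built from $\int dB_\theta\otimes dB_\theta\,d\mu_y^c$; the BCG linear-algebra lemma, exploiting the symmetric-space identity relating $d^2B$ and $dB$, then bounds the resulting determinant by $(c/h_0)^n$, with equality only in one configuration. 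Letting $c\downarrow h(g)$ yields $\vol(X,g_0)\le\big(\tfrac{h(g)}{h_0}\big)^n\vol(Y,g)$, that is $h_0^{\,n}\vol(X,g_0)\le h(g)^n\vol(Y,g)$, which is precisely $\ent(X,g_0)\le\ent(Y,g)$.

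For the rigidity statement, if $\ent(Y,g)=\ent(X,g_0)$ then along a sequence $c_k\downarrow h(g)$ an equicontinuity argument produces a limit $\Gamma$-equivariant map $F$ for which the Jacobian inequality is an equality almost everywhere. The equality case of the algebraic lemma forces $dF$ to be a homothety of ratio $h(g)/h_0$ at almost every point; bootstrapping regularity from the barycenter equation then shows $F$ descends to a homothetic Riemannian covering $Y\to X$, which, both spaces being closed of equal dimension, is an isometry up to scale.

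The step I expect to be the main obstacle is the pointwise Jacobian estimate: differentiating the barycenter equation needs precise control of $B_\theta$ and its derivatives on the symmetric space and of the dependence of $\mu_y^c$ on $y$, and the sharp linear-algebra lemma together with its equality analysis is genuinely delicate; this is exactly where the hypotheses $n\ge3$ and the negatively curved locally symmetric structure of $X$ are indispensable.
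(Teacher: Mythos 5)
The paper does not actually prove Theorem~\ref{thm:BCG}: it is imported from \cite{BCG1} and serves only as the model for Theorem~\ref{thm:rigidity}, which is what Section~\ref{sec:1.9} proves. Your outline is a faithful reconstruction of Besson--Courtois--Gallot's own argument --- the smoothed measures $\mu_y^c$ with weight $e^{-c\,d(y,z)}$ for $c>h(g)$, the barycenter map $F_c$, the degree/Jacobian inequality, the pointwise bound $|\jac F_c|\leq (c/h_0)^n$ via Cauchy--Schwarz and the determinant lemma (Lemma~\ref{lem:lin alg lemma} here), and the limit $c\downarrow h(g)$ --- so it is the right proof of the right statement. The structural difference from the argument the paper does carry out is in the construction of the natural map: because a divisible strictly convex Hilbert geometry is Gromov hyperbolic with boundary $\partial\Omega$, the paper pushes the Patterson--Sullivan measures $\mu_y$ forward by the boundary map $f\colon\partial\tilde Y\to\partial\tilde X$ and takes barycenters directly, with no $c$-smoothing and no limiting procedure; your smoothed construction is the one genuinely required for Theorem~\ref{thm:BCG}, where $g$ is an arbitrary metric and $\partial\tilde Y$ need not exist, while the paper's version buys a much shorter equality analysis. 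Two caveats on completeness: the sharp estimate $\frac{(\det H)^{1/2}}{|\det(I-H)|}\leq(\sqrt n/h_0)^n$ together with its equality case is where $n\geq 3$ enters and, for the complex, quaternionic and octonionic hyperbolic targets, requires a substantially harder argument than the real case you describe; and the rigidity step from ``$dF$ is a homothety almost everywhere'' to a global isometry is not a routine regularity bootstrap --- in \cite{BCG1,BCG2} it occupies a significant portion of the proof (establishing a Lipschitz limit map, upgrading a.e.\ equality, and a delicate degree/volume argument), whereas the paper's Hilbert-geometry setting short-circuits it entirely via Benz\'ecri's $C^2$-boundary rigidity. As a blind proposal for the cited theorem, yours is the correct skeleton with the two hardest bones left unset.
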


\begin{remark} 
Theorem \ref{thm:BCG} has a number of important consequences, including a proof of Mostow's rigidity theorem (see \cite{BCG1, BCG2}). The barycenter method has been employed many times, including the work of Connell and Farb on higher-rank symmetric spaces \cite{CFdegree, CFproducts}. See \cite{CFrecent} for a survey. 
\end{remark}

Our paper closely follows the work of Boland and Newberger in \cite{boland-newberger}, which adapts the Besson--Courtois--Gallot result to compact Finsler manifolds of negative flag curvature. For a $C^2$-Finsler metric $F$ on a manifold, Boland and Newberger define the \emph{eccentricity factor}, denoted by $N(F)$. See Section \ref{sec:natural} for the definiton, but note here that $N(F)$ is equal to 1 when $F$ is Riemannian and is strictly greater than 1 otherwise. (The terminology `eccentricity factor' is coined in \cite{CFrecent}.) Their Finsler extension of Theorem \ref{thm:BCG} is as follows.

\begin{theorem}[see Main Theorem \cite{boland-newberger}]\label{thm:BN} 
Let $(M,F)$ be a compact, reversible, $C^2$-Finsler manifold of negative flag curvature and dimension $\geq 3$ with the same homotopy type as the compact, negatively curved, locally symmetric space $(X,g_0)$. Then
\begin{enumerate}[label=(\roman*)]
	\item $\ent(X,g_0) \leq N(F)\ent(M,F).$
	\item Equality holds above if and only if $(M,F)$ is Riemannian and homothetic to $(X,g_0)$.
\end{enumerate}
\end{theorem}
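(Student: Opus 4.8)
The plan is to adapt Besson--Courtois--Gallot's barycenter (``natural map'') method, following Boland--Newberger, with the extra bookkeeping needed because the source carries a Finsler rather than Riemannian norm. Write $X=\Gamma\backslash\tilde X$ with $\tilde X$ a rank-one symmetric space of negative curvature, $\partial\tilde X$ its boundary sphere, and $B\colon\tilde X\times\partial\tilde X\to\R$ the Busemann function. Since $F$ is reversible and $C^2$ with negative flag curvature, $(\tilde M,d_F)$ is a proper, Gromov-hyperbolic metric space with a genuine symmetric distance and finite, positive volume-growth entropy $h:=h(F)$; in particular $\int_{\tilde M}e^{-c\,d_F(y,z)}\,d\vol_F(z)<\infty$ for all $c>h$. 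Fix a (smoothed) homotopy equivalence $f\colon M\to X$ and lift it to a $\Gamma$-equivariant map $\tilde f\colon\tilde M\to\tilde X$. For $c>h$ and $y\in\tilde M$, form the finite measure $\nu_y^c$ on $\partial\tilde X$ by pushing the weighted volume $e^{-c\,d_F(y,z)}\,d\vol_F(z)$ forward by $\tilde f$ into $\tilde X$ and then out to $\partial\tilde X$ via the visual/Patterson--Sullivan map (equivalently, through the Busemann cocycle as in \cite{BCG1}). Define $\tilde F_c(y)$ to be the barycenter of $\nu_y^c$, i.e.\ the unique minimizer of $x\mapsto\int_{\partial\tilde X}B(x,\theta)\,d\nu_y^c(\theta)$: this exists, is unique, and depends smoothly on $y$ because $\nu_y^c$ has no atoms and, by strict negative curvature, $\mathrm{Hess}_xB(\,\cdot\,,\theta)$ is positive definite. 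The map $\tilde F_c$ is $\Gamma$-equivariant, hence descends to $F_c\colon M\to X$ homotopic to $f$, so $\deg F_c=\deg f=\pm1$.

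Next I would estimate $|\jac F_c|$ pointwise. Differentiating the barycenter equation $\int_{\partial\tilde X}d_xB_{(F_c(y),\theta)}\,d\nu_y^c(\theta)=0$ implicitly expresses $dF_c(y)$ through two symmetric positive forms on $T_{F_c(y)}\tilde X$: the ``output'' form $H(y)=\int\mathrm{Hess}_xB\,d\nu_y^c$, which the $\tfrac14$-pinching identities of $\tilde X$ relate to $\int d_xB\otimes d_xB\,d\nu_y^c$ (the second fundamental form of horospheres), and an ``input'' form assembled from $d\tilde f$ and the differential $d_y d_F(y,\cdot)$ of the distance in its first argument. Taking determinants and applying Cauchy--Schwarz together with BCG's algebraic lemma on the rank-one target yields a bound of the schematic shape
\[
|\jac F_c(y)|\ \le\ \Big(\tfrac{c}{h(g_0)}\Big)^{n}\,\big(1+\varepsilon(c)\big)\,\mathcal N(y),\qquad \varepsilon(c)\to 0\ \text{as}\ c\downarrow h,
\]
where $h(g_0)$ is the volume entropy of $(X,g_0)$ and $\mathcal N(y)\ge 1$ measures the distortion incurred in replacing the Finsler norm on $T_y\tilde M$ — in which $d_yd_F(y,\cdot)$ is a dual-unit covector — by the auxiliary Euclidean structure required to run the determinant estimate. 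By definition of the eccentricity factor, $\mathcal N(y)\le N(F)$ for all $y$, with $\mathcal N(y)=1$ precisely when $F$ is Riemannian at $y$.

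Integrating over $M$ and using $|\deg f|\ge 1$ gives $\vol(X,g_0)=|\deg f|\,\vol(X,g_0)=\big|\int_M\jac F_c\big|\le N(F)\big(c/h(g_0)\big)^n\vol(M,F)\,(1+\varepsilon(c))$; letting $c\downarrow h$ yields $h(g_0)^n\vol(X,g_0)\le N(F)\,h(F)^n\vol(M,F)$, which is assertion (i). For (ii), suppose equality holds. Reading the chain backwards forces $|\deg f|=1$, forces $\mathcal N\equiv 1$ on $M$ (so $F$ is Riemannian), and forces every inequality in the Jacobian estimate to be an equality; but once $F$ is Riemannian this estimate is exactly BCG's, so the equality case of Theorem \ref{thm:BCG} applies and $(M,F)$ is homothetic to $(X,g_0)$. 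Conversely a homothety of $(X,g_0)$ plainly attains equality.

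The main obstacle is the Jacobian estimate of the second step carried out in the Finsler category. BCG's determinant inequality is naturally Euclidean on both source and target; the target here is Riemannian (the symmetric space), but on the source $T_y\tilde M$ carries only the Finsler indicatrix, and the derivative data $d_yd_F(y,\cdot)$ is pinned to the dual indicatrix rather than to a round sphere. Quantifying the resulting distortion uniformly in $c$, and tightly enough to force $F$ Riemannian in the equality case, is exactly the role of the eccentricity factor and is the crux of Boland--Newberger's argument. Besides this, one must verify the routine points that underlie the construction: finiteness and positivity of $h(F)$ and finiteness of $\nu_y^c$ for $c>h$ (from reversibility, compactness, and Gromov hyperbolicity), and enough regularity — $C^2$ of $F$ and smoothness of $g_0$ — that $F_c$ is a genuine $C^1$ map for which $\int_M\jac F_c=\deg f\cdot\vol(X,g_0)$ is valid.
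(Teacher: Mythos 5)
Your part (i) is sound in outline, though you take the original route of \cite{BCG1} --- smeared measures $e^{-c\,d_F(y,z)}\,d\vol_F(z)$ pushed out to $\partial\tilde X$ and a limit $c\downarrow h(F)$ --- whereas the paper (following \cite{boland-newberger}) builds the natural map directly from the Patterson--Sullivan measures of the Finsler metric, so no limiting argument or degree computation is needed; the price is constructing the Patterson--Sullivan density in the Finsler setting. Both routes lead to the same pointwise Jacobian estimate, and you correctly identify where the eccentricity factor enters: in trading the Finsler indicatrix on the source for an auxiliary Euclidean structure before taking determinants.

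The gap is in your equality case. Equality in $h(g_0)^n\vol(X,g_0)=N(F)\,h(F)^n\vol(M,F)$ does \emph{not} force your pointwise distortion $\mathcal N(y)$ to equal $1$; reading the chain
\[
h(g_0)^n\vol(X,g_0)\;\le\; h(F)^n\!\int_M \mathcal N(y)\,d\vol_F(y)\;\le\; h(F)^n N(F)\vol(M,F)
\]
backwards only forces $\mathcal N(y)=N(F)$ almost everywhere, i.e.\ the maximum defining $N(F)$ is attained at every point --- which is perfectly consistent with $N(F)>1$. So you cannot conclude that $F$ is Riemannian this way. The correct mechanism (in \cite{boland-newberger}, and in Section \ref{sec:rigidity} of the present paper for the Hilbert case) is to exploit equality in the Cauchy--Schwarz and AM--GM steps of the Jacobian bound: combined with the equality case of Lemma \ref{lem:lin alg lemma} ($H=\frac1n I$, $K=\frac{h(g_0)}{n}I$), these force $L=(D_y\tilde\Phi)^*\circ(D_y\tilde\Phi)$ to be a scalar matrix, so that $D_y\tilde\Phi$ is a homothety from $(T_yM,F)$ onto $(T_{\tilde\Phi(y)}\tilde X,g_0)$. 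Hence each $F$-unit sphere is the linear preimage of a round sphere, i.e.\ an ellipsoid; only then is $F$ Riemannian, $N(F)=1$, and the equality case of Theorem \ref{thm:BCG} applicable. Without this intermediate step, your argument for (ii) does not close.
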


We extend this result to the Hilbert geometry setting:

\begin{theorem}\label{thm:rigidity}
Let $(Y,F_\Omega)$ be a compact strictly convex real projective manifold of dimension $\geq 3$. Let $(X, g_0)$ be a hyperbolic structure on the same underlying manifold. Then there is a number $N(F_\Omega)\geq 1$, such that 
\[ N(F_\Omega)\ent(Y,F_\Omega)\geq \ent(X, g_0),\]
with equality if and only if $(Y, F_\Omega)$ is isometric to $(X, g_0)$.
\end{theorem}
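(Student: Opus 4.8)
The plan is to reduce Theorem~\ref{thm:rigidity} to the Finsler entropy rigidity theorem of Boland--Newberger (Theorem~\ref{thm:BN}) by verifying that the Hilbert metric $F_\Omega$ on a closed strictly convex projective manifold meets its hypotheses, and then to upgrade the equality case using structural facts specific to Hilbert geometry. First I would recall that the Hilbert metric of a properly convex $\Omega$ is a $C^0$-Finsler metric which, when $\Omega$ is strictly convex with $C^1$ boundary --- which holds automatically here, since for cocompact $\Gamma$ strict convexity of $\Omega$ forces $\partial\Omega$ to be $C^1$ (and in fact $C^{1+\alpha}$) by Benoist --- is reversible and has negative flag curvature in the appropriate (Finsler) sense; the nonpositive-curvature-type conditions needed for Manning's theorem and the identification $h(F_\Omega)=h_{\mathrm{top}}$ are verified in \cite[\S 8]{crampon09}. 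The one genuine mismatch with Theorem~\ref{thm:BN} is the regularity: Boland--Newberger assume a $C^2$-Finsler metric, whereas the Hilbert metric is generally only $C^{1+\alpha}$ and not $C^2$. So the first substantive step is to check that the barycenter method of \cite{boland-newberger} goes through under this weaker regularity.

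The barycenter method itself proceeds as follows, and I would run it essentially verbatim in our setting. Let $\tilde Y = \Omega$ with its $\Gamma$-action, and fix the hyperbolic structure $(X,g_0)$ with $X = \mathbb H^n/\Gamma$ via the given homotopy equivalence (which, by Mostow rigidity on the hyperbolic side, we may take to be induced by a group isomorphism). Using the Patterson--Sullivan / Bowen--Margulis measure $\mu_x$ on $\partial\Omega$ associated to the Hilbert geodesic flow at parameter $c > h(F_\Omega)$, one forms for each $y\in\mathbb H^n$ the pushforward measure on $\partial\Omega$ and defines $\Phi_c\colon \mathbb H^n\to\Omega$ to be the point minimizing the $g_0$-Busemann barycenter functional $\mathcal B_y(z) = \int_{\partial\Omega} B_{\theta}(z)\,d(\iota_*\mu_y)(\theta)$, where $B_\theta$ are Hilbert Busemann functions; this map is $\Gamma$-equivariant, descends to $Y$, and is homotopic to the identity, hence has degree $1$. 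The core estimate bounds the Jacobian of $\Phi_c$ pointwise: one differentiates the implicit equation defining the barycenter, invokes the Cauchy--Schwarz / convexity inequality of Besson--Courtois--Gallot adapted to the Finsler norm (this is precisely where the eccentricity factor $N(F_\Omega)\ge 1$ enters, accounting for the anisotropy of $F_\Omega$ compared to a round norm), and obtains $|\jac \Phi_c|_{F_\Omega} \le N(F_\Omega)\bigl(\tfrac{c}{h(g_0)}\bigr)^n$ up to the standard constants. Integrating over $Y$ against the fact that $\Phi_c$ has degree one gives $\vol(X,g_0)\le N(F_\Omega)(c/h(g_0))^n\vol(Y,F_\Omega)$; letting $c\downarrow h(F_\Omega)$ and rearranging yields $N(F_\Omega)\ent(Y,F_\Omega)\ge \ent(X,g_0)$.

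For the equality case I would argue in two stages. If equality holds then, chasing the chain of inequalities, all the intermediate inequalities in the Jacobian estimate must be equalities: the Cauchy--Schwarz step forces the pushforward measures to be, in a suitable sense, ``round'' at every point, which on one hand forces $N(F_\Omega)=1$ --- i.e.\ the eccentricity factor is trivial --- and on the other hand forces $\Phi_c$ (in the limit) to be a homothety onto its image and the derivative data to match that of the hyperbolic metric. The condition $N(F_\Omega)=1$ is where Hilbert geometry gives something sharper than the general Finsler statement: $N(F)=1$ forces the Finsler norm on each tangent space to be an ellipsoid, i.e.\ $F_\Omega$ is Riemannian; but a Hilbert metric is Riemannian if and only if $\Omega$ is an ellipsoid, which is exactly the hyperbolic (Klein model) case. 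Combined with the homothety from the barycenter rigidity, and after dividing out the scaling, one concludes $(Y,F_\Omega)$ is isometric to $(X,g_0)$.

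I expect the main obstacle to be the regularity bookkeeping: the Hilbert metric is not $C^2$, so several analytic steps in \cite{boland-newberger} --- existence, uniqueness and smooth dependence of the barycenter, the change-of-variables / degree argument for $\Phi_c$, and the pointwise Jacobian differentiation --- must be re-justified for a merely $C^{1+\alpha}$ reversible Finsler metric of negative flag curvature. The cleanest route is probably to note that all the relevant quantities (Busemann functions, the barycenter map, its Jacobian) are defined via the $C^2$ hyperbolic metric $g_0$ on the ``outside'' and only pair against the $C^0$ volume density of $F_\Omega$ on the ``inside,'' so that $C^2$-regularity of $F_\Omega$ is never actually used where it appears to be; alternatively one can approximate $F_\Omega$ by smooth Finsler metrics and pass to the limit, using continuity of entropy and volume under $C^0$-convergence together with the conditions of \cite{crampon09}. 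Either way this is a technical but not deep point, and the geometric heart of the argument is the identification, via $N(F_\Omega)=1$, of the Riemannian locus inside Hilbert geometry with the ellipsoids.
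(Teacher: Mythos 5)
Your high-level strategy --- the Besson--Courtois--Gallot barycenter method with a Finsler eccentricity factor as in Boland--Newberger, followed by Benz\'ecri's theorem to treat equality --- is indeed the paper's strategy, but your construction of the natural map is backwards, and this is not a relabeling issue. You define $\Phi_c:\mathbb H^n\to\Omega$ by minimizing $\int_{\partial\Omega}B_\theta(z)\,d(\iota_*\mu_y)(\theta)$ with $B_\theta$ the \emph{Hilbert} Busemann functions. The paper's map goes the other way: $\tilde\Phi(y)=\bary(f_*\mu_y)$ sends $\Omega\to\mathbb H^n$, pushing the Hilbert Patterson--Sullivan measures forward to $\partial\mathbb H^n$ and minimizing the integral of the \emph{hyperbolic} Busemann functions there. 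This orientation is forced on you twice over: (i) the degree-one integration only yields $\vol(X,g_0)\le\int_Y|\jac\tilde\Phi|\,dF_\Omega$ (hence the desired inequality) when the map goes from $Y$ to $X$ --- as written, your map would bound $\vol(Y,F_\Omega)$ by $\vol(X,g_0)$, the wrong way around; and (ii) computing the Jacobian requires differentiating the implicit barycenter equation, which produces the Hessians of the Busemann functions of the \emph{target} (the endomorphism $K=I-H$). Hyperbolic Busemann functions are smooth; Hilbert Busemann functions are only $C^1$ because $\partial\Omega$ is merely $C^{1+\alpha}$, so the endomorphism $K$ does not exist in your version. In the correct version only first derivatives $D_yB^\Omega_{p,\beta}$ of the Hilbert Busemann functions appear (via the transformation rule for $\mu_y$), which is exactly why $C^{1+\alpha}$ regularity suffices.

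Two further gaps. Your assertion that ``$C^2$-regularity of $F_\Omega$ is never actually used'' in \cite{boland-newberger} misses the one place where it genuinely is: their Jacobian estimate and their definition of $N(F)$ use the direction-dependent osculating Riemannian metrics $(g_u)_{ij}=\tfrac12\,\partial^2F^2/\partial\dot y_i\partial\dot y_j$, which require $C^2$ regularity of $F^2$. Replacing this family by a single auxiliary Riemannian metric $g_r$ (ultimately the Blaschke metric, so that $N(F_\Omega)$ becomes a dimensional constant via Benoist--Hulin) is the paper's main technical modification, not a formality; and your fallback of smoothing $F_\Omega$ does not obviously work, since a smoothing is no longer a Hilbert metric and need not have negative flag curvature, so Theorem~\ref{thm:BN} would not apply to the approximants. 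Finally, in the equality case you claim equality forces $N(F_\Omega)=1$; this does not follow from the chain of inequalities, which only pins down $H=\frac1nI$, $K=\frac{h(g_0)}{n}I$ and saturation of Cauchy--Schwarz and AM--GM. The paper instead shows that $L=(D_y\tilde\Phi)^*\circ(D_y\tilde\Phi)$ is a multiple of the identity, so $D_y\tilde\Phi$ carries the $F_\Omega$-unit sphere to a round sphere; hence $S_{F_\Omega}(1,y)$ is an ellipsoid, $\partial\Omega$ is $C^2$, and Benz\'ecri plus Mostow finish. Your endpoint (Riemannian Hilbert metrics arise only from ellipsoids) is correct, but the route to it needs this repair.
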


Our modifications to the work of Boland and Newberger revolve around the following point: If $F_{\Omega}$ is a Finsler metric defined on a strictly convex domain $\Omega\subset\mathbb R\mathrm P^n$, one can verify (see Section \ref{sec:hilbert}) that $F_{\Omega}$ is $C^2$ if and only if $\partial\Omega$, the boundary of $\Omega$, is $C^2$.  If $\partial \Omega$ is $C^2$, then $\partial\Omega$ is in fact an ellipsoid (originally due to \cite[Theorem C]{benzecri}, see also \cite[Section 3.2]{cramponnotes} for an expository note in English). Hence, any corresponding $Y=\Omega/\Gamma$ with the induced metric is hyperbolic. If $F_{\Omega}$ is not $C^2$, then $\partial\Omega$ is only $C^{1+\alpha}$ for some $0<\alpha<1$ \cite[Theorem 1.3]{Ben1}. The failure of $C^2$ regularity in general leads us to substitute the Blaschke metric, a particular Riemannian metric associated to $\Omega$, for the family of reference metrics used in \cite{boland-newberger} at a key point in the proof. The $C^2$ rigidity for Hilbert geometries also allows us to reach the rigidity conclusion of Theorem \ref{thm:rigidity} with a shorter argument.

\begin{remark} It is conjectured in \cite{BCG2} that Theorem~\ref{thm:BCG} remains true in the class of Finsler metrics. This is equivalent to a restatement of Theorem~\ref{thm:BN} without the presence of the $N(F)$ factor. Following suite, we make the conjecture that Theorem~\ref{thm:rigidity} is valid without the $N(F_\Omega)$ term.\end{remark}


%

\subsection*{Outline of the paper}

Section \ref{sec:pre} provides the necessary background information on the Hilbert metric and Hilbert volume. 

Section \ref{sec:1.9} contains the proof of Theorem~\ref{thm:rigidity}. The `natural map' between the Hilbert geometry and its hyperbolic counterpart is constructed in Section~\ref{subsec:natural}. The Jacobian of the natural map is bounded, and the inequality statement of Theorem~\ref{thm:rigidity} is deduced in Section~\ref{sec:Jacobian}. Finally, the rigidity statement of Theorem~\ref{thm:rigidity} is proved in Section~\ref{sec:rigidity}.

Section~\ref{sec:1.1} recalls basic properties of the Blaschke metric for Hilbert geometries, in particular a relationship between the Hilbert and Blaschke metrics due to Benoist and Hulin \cite{benoisthulin13}. We then prove Theorem \ref{Main} and the $n\geq 3$ case of Theorem \ref{thm:entropytozero}. We conclude by proving Theorem \ref{thm:entropytozero} for 2-dimensional Hilbert geometries. Since our previous results require dimension greater than two, this argument uses the well-known result of Katok on entropy rigidity for surfaces \cite{katok_entropy}, as well as the particularly nice behavior of the Blaschke metric in dimension 2 due again to Benoist--Hulin \cite{benoisthulin14}.

%

\subsection*{Acknowledgments}

The authors are grateful to Dick Canary, Daryl Cooper and Ralf Spatzier for many useful discussions. The second author was supported in part by NSF RTG grant 1045119.

%

\section{Hilbert Geometry}\label{sec:pre}

This section provides the basic definitions for Hilbert geometry. For more details the reader may consult \cite{BK} and \cite{CLTi}.

%

\subsection{Definition of a Hilbert geometry}\label{sec:hilbert}

Let $\Omega$ be a properly convex domain of $\mathbb R\mathrm P^n$, as defined in the Introduction. Let $\partial\Omega$ denote the boundary of $\Omega$. The \emph{Hilbert metric} $d_{\Omega}$ on $\Omega$ is given by
\[ d_{\Omega}(x,y)=\frac12\bigg\lvert\log[p,x,y,q]\bigg\rvert=\frac12\bigg\lvert\log \left(\frac{\|y-p\|\cdot\|x-q\|}{\|y-q\|\cdot\|x-p\|}\right)\bigg\rvert \]
where $p,q$ are the intersection points, in the chosen affine patch, of $\partial\Omega$ with the projective line containing $x$ and $y$, and $\|\cdot\|$ denotes Euclidean distance.

Elements of $\pgl(n+1,\R)$ that preserve $\Omega$ are isometries of the Hilbert metric, since projective transformations preserve cross-ratio. If $\Omega$ is strictly convex, these elements constitute the full group of isometries, denoted by $\pgl(\Omega)$ \cite{H}. A \emph{Hilbert geometry} is a triple
\[(\Omega,d_{\Omega},\pgl(\Omega)).\] 

The Hilbert geometry where $\Omega$ is the interior of an ellipsoid is the Klein model of hyperbolic $n$-space, $\mathbb H^n$. The factor of $1/2$ in the definition of $d_{\Omega}$ ensures constant curvature $-1$.

The Hilbert metric on a properly convex domain $\Omega$ induces the Finsler structure
\[F_{\Omega}(y,v)=\frac{\|v\|}{2}\left(\frac{1}{\|y-v^-\|}+\frac{1}{\|y-v^+\|}\right) \] 
where $v^-,v^+$ are the points of intersection of the projective line through $y$ in the direction of $v$ with $\partial\Omega$. Note that it is immediate from the definition that $C^r$-regularity of $F_\Omega$ is equivalent to $C^r$-regularity of $\partial\Omega$.

%

\subsection{Hilbert Volume} 

A group $\Gamma<\pgl(\Omega)$ is discrete in $\pgl(n+1,\R)$ if and only if $\Gamma$ acts properly discontinuously on $\Omega$ \cite{CLTi}. Therefore, for such a $\Gamma$ the projective and Finsler structures of $\Omega$ descend to the quotient orbifold $Y=\Omega/\Gamma$.

The Finsler structure on $Y$ provides for a natural definition of volume. Fix any Riemannian metric $g$ on $Y$ and let $B_g(1,y)$ and $B_{F_{\Omega}}(1,y)$ denote balls of radius 1 in $T_yY$ with respect to $g$ and $F_{\Omega}$, respectively. Then for any $y\in Y$, the Finsler volume element is 
\[dF_{\Omega}(y)=\frac{\vol_g(B_g(1,y))}{\vol_g(B_{F_{\Omega}}(1,y))}dg. \]
It is easy to check that the definition is independent of the choice of $g$. The volume of $Y$ with respect to Finsler volume will be referred to as the \emph{Hilbert volume} of $Y$.

%

\section{Proof of Theorem~\ref{thm:rigidity}}\label{sec:1.9}

In this section we prove Theorem~\ref{thm:rigidity}. As mentioned in the Introduction, the argument closely follows Boland--Newberger's adaptation of the Besson--Courtois--Gallot result to the Finsler setting. For completeness, we present the argument here, highlighting the modifications we make and referring the reader to the original papers for the details which are unchanged.

%

\subsection{The natural map}\label{subsec:natural} 

Let $Y=\Omega/\Gamma$ be a compact strictly convex projective $n$-manifold, $n\geq 3$. Let $X$ be a hyperbolic manifold homeomorphic to $Y$. Note that $X=\mathcal E/\Gamma_0$, where $\mathcal E$ represents an ellipsoid, $\Gamma_0<\pgl(\mathcal E)$, and $\Gamma_0\cong\pi_1(X)\cong\pi_1(Y)\cong\Gamma$. The universal covers have natural identifications: $\tilde{Y}=\Omega$ and $\tilde{X}=\mathcal E=\mathbb H^n$ is hyperbolic $n$-space.

%

\subsubsection{Busemann functions} 

Let $\Omega$ be a properly convex domain equipped with Hilbert metric $d_{\Omega}$. For $p,z\in\Omega$ the \emph{Busemann function} $B_{p,z}^{\Omega}:\Omega\rightarrow\mathbb R$ is defined by 
\[B^{\Omega}_{p,z}(q)=d_{\Omega}(p,q)-d_{\Omega}(q,z).\]
If $\partial\Omega$ is $C^1$, the Busemann function can be extended so that the argument $z$ takes values in  $\overline{\Omega}$ (cf. \cite[Lemme 3.4]{Ben1}). For $\xi\in\partial\Omega$, take \[B^{\Omega}_{p,\xi}(q):=\lim_{z\to\xi}B^{\Omega}_{p,z}(q)\] where $z\to\xi$ along any path. Geometrically, $B^{\Omega}_{p,\xi}(q)$ is the signed distance between horospheres based at $\xi$ passing through $p$ and $q$. If $p$ is fixed as a basepoint, then $B^{\Omega}_{p,\xi}(q)$ can be viewed as a family of functions mapping $\Omega$ to $\mathbb R$ that is parametrized by elements in $\partial \Omega$. With this notation, Busemann functions on $\tilde X$ will be denoted by $B^\mathcal{E}_{p,\xi}$.

%

\subsubsection{Patterson-Sullivan measures} 

The \emph{visual boundary} of a convex domain $\Omega$ is the space of all geodesic rays based at a point modulo bounded equivalence. If $\Omega$ is strictly convex with $C^1$-boundary, the visual boundary of  $\Omega$ coincides with $\partial \Omega$. 

Suppose, furthermore, that $\Omega$ admits a cocompact action by a discrete group of projective transformations. In this case we can define the \emph{Patterson-Sullivan} density, a family of measures $\{\mu_p\}_{p\in\Omega}$ on the boundary of $\Omega$. The defining properties of the Patterson-Sullivan density are the following:
\begin{itemize}
\item (quasi-$\Gamma$-invariance) $\mu_{\gamma p}=\gamma_*\mu_p$ for all $\gamma\in\Gamma$, $p\in\Omega$
\item (transformation rule) $\frac{d\mu_q}{d\mu_p}(\beta)=e^{-hB^{\Omega}_{p,\beta}(q)}$
\end{itemize}
where $h$ is the topological entropy of the Hilbert geodesic flow or, equivalently in our setting, the volume entropy of $(\Omega,F_{\Omega})$. 


The construction of the Patterson-Sullivan measures originates with the work of Patterson for Fuchsian groups \cite{patterson} and Sullivan for convex cocompact actions on spaces of constant negative curvature \cite{sullivan79}. The concept has been extended to many settings, the most relevant being compact negatively curved manifolds and $\text{CAT}(-1)$ metric spaces \cite{Kaimanovich1990, roblin}. For the familiar reader, we remark that although non-Riemannian Hilbert geometries are not negatively curved in the classical sense and are not even $\text{CAT}(0)$, extending the Patterson-Sullivan theory in the strictly convex case is straightforward (c.f. \cite[Section 4.2]{cramponthese}).

%

\subsubsection{The barycenter of a measure} 

Fix some basepoint $o\in\tilde{X}$. For any probability measure $\lambda$ on $\partial\tilde{X}$ and $x\in\tilde{X}=\mathcal E$, let
\[\mathcal B(x,\lambda):=\int_{\alpha\in\partial\tilde{X}} B_{o,\alpha}^{\mathcal E}(x)d\lambda(\alpha).\]
The Busemann functions on $\tilde{X}$ are strictly convex along geodesic segments, hence $\mathcal B(x,\lambda)$ has a unique minimum \cite[Appendix A]{BCG1}. Denote this minimum by $\bary(\lambda)$; this is the \emph{barycenter} of $\lambda$.

It is a straightforward exercise to check that the barycenter of $\lambda$ is $\Gamma$-equivariant, that is, that
\[\bary(\gamma_*\lambda)=\gamma\cdot\bary(\lambda) \text{ for all } \gamma\in\Gamma.\]

%

\subsubsection{The natural map}\label{sec:natural}

Let $f:\partial\tilde{Y}\rightarrow\partial\tilde{X}$ be the $\Gamma$-equivariant homeomorphism induced by the identification of fundamental groups. A natural $\Gamma$-equivariant map from $\tilde{Y}$ to $\tilde{X}$ is constructed by associating to each $y\in\tilde{Y}$ the barycenter in $\tilde{X}$ of the push-forward of the Patterson-Sullivan measure $\mu_y$ under the map $f$. That is, $\tilde{\Phi}:\tilde{Y}\rightarrow\tilde{X}$ is given by
\[\tilde{\Phi}(y)=\bary(f_*\mu_y).\]

The $\Gamma$-equivariance of $\tilde{\Phi}$ follows from the $\Gamma$-equivariance of $f$ and $\bary$, and so it descends to a map $\Phi:Y\rightarrow X$. This `natural map' is at the heart of the Besson-Courtois-Gallot approach to entropy rigidity. Theorem~\ref{thm:rigidity} will be proved by bounding the Jacobian of $\tilde{\Phi}$.

\begin{remark} Boland and Newberger assume their Finsler manifold has negative flag curvature. This ensures that $\tilde{Y}$ is diffeomorphic to $\mathbb R^n$. In our setting, $\tilde{Y}$ is equal to $\Omega$, a bounded domain in projective space, by assumption.
\end{remark}

%

\subsection{The Jacobian of the natural map}\label{sec:Jacobian}

Let $v(x,\alpha)$ be the hyperbolic unit tangent vector based at $x\in\tilde{X}$ with forward endpoint $\alpha\in\partial\tilde{X}$. Note that $\bary(\lambda)=\hat{x}$ if and only if $d_{\hat{x}}\mathcal{B}(x,\lambda)=0$ where $d$ denotes the gradient. Since
\[d\mathcal{B}(x,\lambda)=\int_{\alpha\in\partial\tilde{X}}dB^{\mathcal E}_{o,\alpha}(x)d\lambda(\alpha)\]
 and, by the geometric description of the Busemann functions in hyperbolic space, $dB^{\mathcal E}_{o,\alpha}(x)=-v(x,\alpha)$, we see that $\bary(\lambda)$ is characterized implicitly by the condition
 \[0=\int_{\alpha\in\partial\tilde{X}}v(\bary(\lambda),\alpha)d\lambda(\alpha).\]
 Therefore, $\tilde{\Phi}(y)$ satisfies
 \[0=\int_{\alpha\in\partial\tilde{X}}v(\tilde{\Phi}(y),\alpha)d(f_*\mu_y)(\alpha).\]

 Changing variables by setting $\beta=f^{-1}(\alpha)$, and by the transformation rule, we have for a fixed $p\in\tilde{Y}$ and for all $y\in\tilde{Y}$, \[0=\int_{\beta\in\partial\tilde{Y}}v(\tilde{\Phi}(y),f(\beta))e^{-h(F_{\Omega})B^{\Omega}_{p,\beta}(y)}d\mu_p(\beta).\]

This expression allows us to verify that $\tilde \Phi$ is differentiable.  Let
 \[F(x,y)=\int_{\beta\in\partial\tilde{Y}}v(x,f(\beta))e^{-h(F_{\Omega})B^{\Omega}_{p,\beta}(y)}d\mu_p(\beta).\]
Clearly $v$, and therefore $F$, is smooth in its first variable. Since $\partial\Omega$ is $C^{1+\alpha}$, $B^\Omega_{p,\beta}(y)$, and therefore $F$, is differentiable in $y$ (see \cite[$\S$3.24]{Ben1}). $F$ implicitly defines $\tilde{\Phi}$ by $F(\tilde{\Phi}(y),y)=0$ and the Implicit Function Theorem implies that $\tilde \Phi$ is differentiable.

We compute the Jacobian of $\tilde \Phi$ as follows. Let $\langle - , - \rangle$ denote the inner product with respect to $g_0$.  For our fixed $p\in\tilde{Y}$, all $y\in\tilde{Y}$, and all $w\in T_{\tilde{\Phi}(y)}\tilde{X}$,
\[0=\int_{\beta\in\partial\tilde{Y}}\langle v(\tilde{\Phi}(y),f(\beta)),w\rangle e^{-h(F_{\Omega})B^{\Omega}_{p,\beta}(y)}d\mu_p(\beta).\]
Taking the differential of this expression with respect to $y$ we have
\begin{align}\label{eqn:ints}
\begin{split}
	\int_{\beta\in\partial\tilde{Y}}\langle Dv_{(\tilde{\Phi}(y),f(\beta))} & D_y\tilde{\Phi}(u),w\rangle d\mu_y(\beta) \\
	& h(F_{\Omega})\int_{\beta\in\partial\tilde{Y}}\langle v(\tilde{\Phi}(y),f(\beta)),w\rangle D_yB^{\Omega}_{p,\beta}(u)d\mu_y(\beta)
	\end{split}
\end{align}
for all $w\in T_{\tilde{\Phi}(y)}\tilde{X}$ and all $u\in T_y\tilde{Y}$.

Let $K$ and $H$ be the endomorphisms on $T_{\tilde{\Phi}(y)}\tilde{X}$ defined by
\[\langle K(w^{\prime}),w\rangle=\int_{\alpha\in\partial\tilde{X}}\langle Dv_{(\tilde{\Phi}(y),\alpha)}w^{\prime},w\rangle d(f_*\mu_y)(\alpha)\] 
and 
\[\langle H(w),w\rangle=\int_{\alpha\in\partial\tilde{X}}\langle v(\tilde{\Phi}(y),\alpha),w\rangle^2 d(f_*\mu_y)(\alpha).\]

The reader can verify (or see the nice exposition in \cite{Feres}) that $H$ and $K$ are symmetric, $\tr(H)=1$, and (since $\tilde{X}$ is hyperbolic space) that $K=I-H$. Then from equation~\ref{eqn:ints} and an application of Cauchy-Schwarz, we have
\begin{align}\label{eqn:CS}
	\begin{split}
		\lvert\langle K\circ D_y\tilde{\Phi}(u),w\rangle\rvert=&h(F_{\Omega})\bigg\lvert\int_{\beta\in\partial\tilde{Y}}\langle v(\tilde{\Phi}(y),f(\beta)),w\rangle D_yB^{\Omega}_{p,\beta}(u)d\mu_y(\beta)\bigg\rvert\\
		\leq& h(F_{\Omega})\langle H(w),w\rangle^{\frac{1}{2}}\left(\int_{\beta\in\partial\tilde{Y}} D_yB^{\Omega}_{p,\beta}(u)^2d\mu_y(\beta)\right)^{\frac{1}{2}}.
	\end{split}
\end{align}

Since the goal is to bound $\lvert\jac(\Phi)\rvert$ from above, we assume without loss of generality that $D_y\tilde{\Phi}$ has full rank. Fix a basis $\{e_i\}$ which is orthonormal with respect to the hyperbolic metric and in which the matrix for $H$ is diagonal. Fix a Riemannian metric $g_r$ on $Y$; we will specify a choice  for this metric in Section~\ref{sec:1.1}.

Let $v_i^{\prime}=(K\circ D_y\tilde{\Phi})^{-1}(e_i)$ and apply Gram-Schmidt to get a basis $\{v_i\}$ for $T_y\tilde{Y}$ which is orthogonal with respect to $g_r$. Then
\[ K\circ D_y\tilde{\Phi}:T_y\tilde{Y}\rightarrow T_{\tilde{\Phi}(y)}\tilde{X} \] 
is upper-triangular with respect to this basis.

We have
\[\jac(\tilde{\Phi})(y)=\frac{\det D_y\tilde{\Phi}}{\vol_{F_{\Omega}}(\vspan\{v_i\})} \] 
where the determinant is computed for the matrix with respect to the bases $\{e_i\}$ and $\{v_i\}$. Since $\{v_i\}$ is orthonormal for $g_r$, $\vspan\{v_i\}$ has $g_r$-volume 1. Hence, by the definition of $dF_{\Omega}$,
\[ \vol_{F_{\Omega}}(\vspan\{v_i\})=\frac{\vol_{g_r}(B_{g_r}(1,y))}{\vol_{g_r}(B_{F_{\Omega}}(1,y))}:=\rho(y,g_r).\] 
Thus we have that
\[ \lvert\jac(\tilde{\Phi})(y)\rvert=\frac{\lvert\det D_y\tilde{\Phi}\rvert}{\rho(y,g_r).} \]

Since 
\[ \lvert\det (K\circ D_y\tilde \Phi)\rvert = |\jac(\tilde \Phi)(y)|\cdot \rho(y,g_r) \cdot |\det K|, \]
we can use equation \eqref{eqn:CS}, the fact that $K\circ D_y\tilde \Phi$ is upper-triangular, and the fact that $H$ is diagonal to compute as follows: 
\begin{align}\label{eqn:JK bound}
	\begin{split}
		|\jac(\tilde \Phi&)(y) |\cdot \rho(y,g_r) \cdot|\det K| = \prod_{i=1}^n |\langle K\circ D_y\tilde \Phi(v_i),e_i\rangle|\\
			& \leq h(F_\Omega)^n \prod_{i=1}^n \langle H(e_i),e_i\rangle^{\frac{1}{2}} \prod_{i=1}^n \Big( \int_{\beta \in \partial \tilde Y} D_yB^\Omega_{p,\beta}(v_i)^2 d\mu_y(\beta)\Big)^{\frac{1}{2}}  \\
			& = h(F_\Omega)^n \det(H)^{\frac{1}{2}} \Big[ \big( \prod_{i=1}^n \int_{\beta \in \partial \tilde Y} D_yB^\Omega_{p,\beta}(v_i)^2 d\mu_y(\beta) \big)^{\frac{1}{n}} \Big]^{\frac{n}{2}}  \\
			& \leq h(F_\Omega)^n \det(H)^{\frac{1}{2}} \Big( \frac{1}{n} \sum_{i=1}^n \int_{\beta \in \partial \tilde Y} D_yB^\Omega_{p,\beta}(v_i)^2 d\mu_y(\beta) \Big)^{\frac{n}{2}}.
	\end{split}
\end{align}
At the last step we use the fact that the arithmetic mean bounds the geometric mean above. Since the $\{v_i\}$ are orthonormal for $g_r$, 
\begin{align}
	\begin{split}
		 \sum_{i=1}^n D_yB^\Omega_{p,\beta}(v_i)^2 = \Vert d_yB^\Omega_{p,\beta} \Vert_{g_r}^2 & = \max_{v\in S_{g_r}(1,y)} D_yB^\Omega_{p,\beta}(v)^2  \\
			 & = \max_{v\in S_{g_r}(1,y)} F_\Omega(v)^2 D_yB^\Omega_{p,\beta}(\hat v)^2 \label{eqn:maxFOmega2}
	 \end{split}
\end{align}
for $\hat v = \frac{v}{F_\Omega(v)}.$ Combining this with equation \eqref{eqn:JK bound}, and noting that for an $F_\Omega$-unit vector like $\hat v$, $D_yB^\Omega_{p,\beta}(\hat v)^2\leq 1$, we have proven
\begin{equation}\label{eqn:Jac bound2}
	 |\jac(\tilde \Phi)(y)| \leq \frac{h(F_\Omega)^n}{n^{\frac{n}{2}}} \frac{(\det H)^{\frac{1}{2}}}{|\det K|} \frac{\max_{v\in S_{g_r}(1,y)}F_\Omega(v)^n}{\rho(y,g_r)}.
\end{equation}

If $F_\Omega$ were Riemannian, we could set $g_r = F_\Omega$ and the third term above would be equal to 1. As $F_\Omega$ may not be Riemannian, Boland--Newberger make the following definition:

\begin{definition}[compare with {\cite[p. 3]{boland-newberger}}]\label{defn:distortion}
Let $(Y,F)$ be any Finsler manifold, and let $g_r$ be a Riemannian metric on $Y$. Then let
\[ N(F) = \max_{y\in Y} \max_{v\in S_{g_r}(1,y)} \frac{F(v)^n \vol_{g_r}(B_F(1,y))}{\vol_{g_r}(B_{g_r}(1,y))}. \]
\end{definition}

Note that $\vol_{g_r}(B_{g_r}(1,y))$ is a constant depending only on the dimension of $Y$. It is also easy to check that $N(F)$ is unchanged by scaling the Riemannian metric $g_r$. The following lemma is a straight-forward exercise:

\begin{lemma}\label{lem:distortion 1}
For any Finsler manifold $(Y,F)$ and any $g_r$, $N(F)\geq 1$. Furthermore, $N(F)=1$ if and only if for all $y\in Y$, $F_\Omega$ and the norm induced by $g_r$ are homothetic.
\end{lemma}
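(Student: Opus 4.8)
The plan is to work pointwise: since $N(F)$ is a maximum over $Y$ of pointwise quantities, and the claim is about the value and the equality case, it suffices to analyze the fixed-point inequality
\[
\max_{v\in S_{g_r}(1,y)} \frac{F(v)^n\,\vol_{g_r}(B_F(1,y))}{\vol_{g_r}(B_{g_r}(1,y))} \;\geq\; 1
\]
for each $y$, with equality (for all $y$) characterizing the homothety condition. Fix $y$ and write $\|\cdot\|$ for the Euclidean norm induced by $g_r$ on $T_yY$; let $B = B_{g_r}(1,y)$ be the Euclidean unit ball and $K = B_F(1,y)$ the Finsler unit ball, a convex body (symmetric, since $F$ is a norm on the fiber). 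First I would normalize: $N(F)$ is scaling-invariant in $g_r$, so I may rescale $g_r$ so that $\vol_{g_r}(B) = \vol_{g_r}(K)$, i.e. the two bodies have equal volume; then the pointwise quantity at $y$ reduces to $\max_{v\in S_{g_r}(1,y)} F(v)^n = \left(\max_{\|v\|=1} F(v)\right)^n$. So the claim becomes: if $K$ and $B$ have the same Euclidean volume, then $\max_{\|v\|=1} F(v) \geq 1$, with equality iff $K = B$.

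The key geometric observation is that $\max_{\|v\|=1} F(v) = 1/r(K)$, where $r(K)$ is the inradius of $K$ (the largest $\rho$ with $\rho B \subseteq K$): indeed $F(v)\le 1$ for all $v$ with $\|v\|=\rho$ exactly when $\rho B\subseteq K$. So I must show $r(K)\le 1$ whenever $\vol(K)=\vol(B)$, with equality iff $K=B$. This is immediate: if $r(K) > 1$ then $B\subsetneq r(K)B \subseteq K$, which since $B$ already has volume $\vol(B)=\vol(K)$ forces $\vol(K)>\vol(B)$, a contradiction; and if $r(K)=1$ then $B\subseteq K$ with $\vol(B)=\vol(K)$, and since both are convex bodies this forces $K=B$ (a proper convex superset of a convex body of the same finite volume has strictly larger volume). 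Unwinding the normalization, equality at $y$ means $K = \lambda B$ for some $\lambda>0$ in the original $g_r$, i.e. the Finsler norm $F(\cdot)$ on $T_yY$ and the $g_r$-norm are homothetic at $y$; requiring $N(F)=1$ is requiring equality for every $y$.

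Then I would assemble: $N(F) = \max_{y} (\text{pointwise quantity}) \geq 1$ since each pointwise quantity is $\geq 1$ (this uses the scaling-invariance remark to reduce to the equal-volume case at each point — note the rescaling of $g_r$ is allowed because the whole expression for $N(F)$ is scale-invariant, so I can even just do the computation without rescaling and carry the volume ratio through). For the equality case: $N(F)=1$ forces the pointwise quantity to equal $1$ at every $y$ (it can never be $<1$), which by the above is exactly homothety of $F$ and the $g_r$-norm at every $y$; conversely if they are homothetic at every $y$ with ratios $\lambda(y)$, then at each $y$ the pointwise quantity computes to $1$ directly, so the max is $1$.

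I do not expect a serious obstacle; the only point requiring a word of care is the rigidity of the volume inequality for nested convex bodies (that $B\subseteq K$, both convex with equal finite volume, implies $K=B$), which follows because the closure of $K\setminus B$ would otherwise contain an open set; this is standard and I would state it without belaboring it. The bookkeeping around the scaling-invariance of $N(F)$ — making sure the reduction to equal volumes is legitimate at each point independently — is the one place to be precise, but it is exactly the remark already recorded after Definition~\ref{defn:distortion}.
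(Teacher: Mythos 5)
Your argument is correct: the pointwise inequality follows from the containment $\tfrac{1}{M}B_{g_r}(1,y)\subseteq B_F(1,y)$ with $M=\max_{v\in S_{g_r}(1,y)}F(v)$ (equivalently your inradius formulation), and the equality case is handled properly via the rigidity of volume for nested convex bodies, with the pointwise scale-invariance of the expression checked as needed. The paper states this lemma without proof as a ``straight-forward exercise,'' and your proof is exactly the intended standard argument.
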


Returning to our bounds in equation \eqref{eqn:Jac bound2} and using Definition \ref{defn:distortion}, for all $y \in Y$, 
\begin{equation}\label{eqn:Jac bound1}
	 |\jac(\tilde \Phi)(y)| \leq \frac{h(F_\Omega)^n}{n^{\frac{n}{2}}} \frac{(\det H)^{\frac{1}{2}}}{|\det K|} N(F_\Omega).
\end{equation}

\begin{remark}\label{rmk:gu}
A careful reading of \cite{boland-newberger} shows that, rather than a single Riemannian metric $g_r$, one can run the argument above using a family of Riemannian metrics $\{g_u\}$ parametrized by $F_\Omega$-unit tangent vectors $u$. (The definition of $N(F_\Omega)$ is adjusted accordingly.) We do not exploit this additional flexibility here.

Boland--Newberger use $\{g_u\}$ defined by
\[  g_u(v,w) = \sum_{i,j=1}^n (g_u)_{ij} \ \mbox{ where } \ (g_u)_{ij} = \frac{1}{2} \frac{\partial^2 F^2}{\partial \dot y_i \dot y_j}(y,u).\]
This `direction-dependent' inner product on $T_y\tilde Y$ is a standard tool in Finsler geometry
(see \cite[\S1.2 B]{chern}), but it requires at least $C^2$ regularity of $F^2$, which (as noted in
the Introduction) we do not have unless $(\tilde Y, d_\Omega)$ immediately reduces to the hyperbolic
case 
\cite{benzecri}.
A key step in our argument (see Section \ref{sec:1.1}) is finding a good replacement for $\{g_u\}$.
\end{remark}

The following lemma is where $n\geq 3$ is required for the proof of Theorem \ref{thm:rigidity}. Its proof is an optimization exercise. We remark that volume entropy for the hyperbolic metric, $h(g_0)$, is equal to $n-1$.

\begin{lemma}[see Appendice B, \cite{BCG1}]\label{lem:lin alg lemma}
For a symmetric, $n\times n$ matrix $H$ with trace 1 and $K=I-H$, 
\begin{itemize}
	\item[(i)]  $\frac{(\det H)^{\frac{1}{2}}}{|\det K|} \leq \Big( \frac{\sqrt{n}}{h(g_0)} \Big)^n$ and
	\item[(ii)] if equality holds, $H=\frac{1}{n}I$ and therefore $K=\frac{h(g_0)}{n}I$.
\end{itemize} 
\end{lemma}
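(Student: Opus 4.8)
The plan is to solve the constrained optimization problem directly, exploiting the fact that both $H$ and $K = I - H$ are simultaneously diagonalizable. Write the eigenvalues of $H$ as $\lambda_1, \dots, \lambda_n$; since $H$ is symmetric with $\tr(H) = 1$ we have $\sum_i \lambda_i = 1$, and the eigenvalues of $K$ are $1 - \lambda_i$. Then
\[
\frac{(\det H)^{1/2}}{|\det K|} = \frac{\prod_{i=1}^n \lambda_i^{1/2}}{\prod_{i=1}^n |1-\lambda_i|}.
\]
So the statement reduces to the purely scalar inequality: for reals $\lambda_i$ summing to $1$ (with the product over $1 - \lambda_i$ nonzero, i.e.\ $D_y\tilde\Phi$ of full rank so $K$ invertible), we want $\prod \lambda_i^{1/2} / \prod|1-\lambda_i| \leq (\sqrt n / (n-1))^n$. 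First I would observe that we may assume all $\lambda_i \geq 0$ (otherwise the left side, as written with the square root, is either zero or not real in the relevant regime — one should note that $H$ is positive semidefinite here since $\langle H(w),w\rangle = \int \langle v, w\rangle^2 d\mu \geq 0$, which is exactly why the $(\det H)^{1/2}$ term makes sense), and in fact each $\lambda_i \in [0,1]$ because $0 \leq \langle H(w),w\rangle \leq \int |w|^2 d\mu = |w|^2$ as $v$ is a unit vector and $\mu$ a probability measure; hence $1 - \lambda_i \geq 0$ and we can drop the absolute values.

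Next I would set up the Lagrange multiplier computation for $f(\lambda) = \frac12 \sum_i \log\lambda_i - \sum_i \log(1-\lambda_i)$ subject to $\sum_i \lambda_i = 1$. The stationarity condition is $\frac{1}{2\lambda_i} + \frac{1}{1-\lambda_i} = c$ for all $i$, a single equation in $\lambda_i$ of degree two, so $\lambda_i$ takes at most two distinct values among the coordinates at a critical point. The bulk of the work is to rule out the two-value critical points (and boundary behavior where some $\lambda_i \to 0$, forcing $f \to -\infty$, or some $\lambda_i \to 1$, forcing some other $\lambda_j \to 0$) and conclude the maximum is the symmetric point $\lambda_i = 1/n$ for all $i$. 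At $\lambda_i = 1/n$ one checks directly $\prod \lambda_i^{1/2} = n^{-n/2}$ and $\prod(1-\lambda_i) = ((n-1)/n)^n$, giving the ratio $(\sqrt n/(n-1))^n = (\sqrt n / h(g_0))^n$ since $h(g_0) = n-1$; this also immediately yields part (ii), as equality forces $H = \frac1n I$ and hence $K = \frac{n-1}{n} I = \frac{h(g_0)}{n} I$.

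For the step of eliminating the mixed critical points, I would argue as follows: suppose at a critical point $p$ of the coordinates equal $a$ and $q = n - p$ of them equal $b$, with $pa + qb = 1$ and $a \neq b$ both roots of the same quadratic $2\lambda(c)(1-\lambda) \cdot c = \dots$ — more cleanly, $a$ and $b$ satisfy $\frac{1}{2\lambda} + \frac{1}{1-\lambda} = c$, i.e.\ $(1-\lambda) + 2\lambda = 2c\lambda(1-\lambda)$, i.e.\ $2c\lambda^2 - (2c+1)\lambda + 1 = 0$, so $a + b = (2c+1)/(2c)$ and $ab = 1/(2c)$. One then compares the value $f$ at such a point to $f(1/n,\dots,1/n)$; the cleanest route is probably to show the function $g(\lambda) = \frac12\log\lambda - \log(1-\lambda)$ restricted to $[0,1]$ has the property that for fixed sum, any spreading away from the common mean strictly decreases $\sum g(\lambda_i)$ — this would follow if $g$ were concave, but $g$ is not concave on all of $(0,1)$ (it is convex near $0$), so a direct concavity/Jensen argument fails and this is the main obstacle. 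The honest fix is to note that the \emph{constrained} maximum over the compact set $\{\lambda_i \geq 0, \sum\lambda_i = 1\}$ is attained, the interior critical points are the finitely many two-value configurations plus the symmetric one, and then to verify by an explicit second-derivative (bordered Hessian) computation, or by directly plugging the two-value parametrization into $f$ and optimizing over the one remaining free parameter, that none of the genuinely-two-value configurations beats the symmetric point. This is the optimization exercise the paper alludes to, and I would cite Appendice B of \cite{BCG1} for the detailed verification rather than reproduce it, since the computation is somewhat involved but entirely elementary.
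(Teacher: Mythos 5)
The paper itself does not prove this lemma; it states that the proof is ``an optimization exercise'' and defers entirely to Appendice B of \cite{BCG1}. Your reduction --- diagonalize $H$, observe from the integral formula $\langle H(w),w\rangle=\int\langle v,w\rangle^2\,d\mu$ that $H$ is positive semidefinite (a hypothesis the lemma's statement omits but needs for $(\det H)^{1/2}$ to be the intended quantity), conclude the eigenvalues $\lambda_i$ lie in $[0,1]$ and sum to $1$, and maximize $f(\lambda)=\sum_i\bigl(\tfrac12\log\lambda_i-\log(1-\lambda_i)\bigr)$ over the simplex --- is the correct and standard setup, matching the cited appendix, and your computation of the value and of the equality case (ii) at $\lambda_i=1/n$ is right. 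But your proof of part (i) is not complete, and you say so yourself: the decisive step, ruling out the two-eigenvalue critical points, is deferred to the very reference the paper cites, so the ``proposal'' ends exactly where the hard work begins.

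The gap is not cosmetic, because your sketch never uses the hypothesis $n\geq 3$, and the inequality is \emph{false} for $n=2$: with $\lambda=(t,1-t)$ the ratio is $(t(1-t))^{-1/2}$, which equals the claimed bound $2$ at $t=1/2$ but tends to $\infty$ as $t\to 0$. The place this surfaces in your argument is the boundary analysis you wave at: if $\lambda_1\to 1$ and $\lambda_j\approx\epsilon/(n-1)$ for $j\geq 2$, then $f=\tfrac{n-3}{2}\log\epsilon+O(1)$, which tends to $-\infty$ for $n>3$, is merely bounded for $n=3$, and tends to $+\infty$ for $n=2$. So the degenerate limits and the two-value critical points genuinely must be analyzed, and any argument that does not invoke $n\geq 3$ cannot close. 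Two small slips along the way: the stationarity quadratic should read $2c\lambda^2-(2c-1)\lambda+1=0$ (so $a+b=(2c-1)/(2c)$), and $g(\lambda)=\tfrac12\log\lambda-\log(1-\lambda)$ is concave near $0$ and convex near $1$ (the inflection is at $\lambda=\sqrt2-1$), not convex near $0$ as you state --- though your conclusion that Jensen fails stands, and note that the tangent-line trick at $1/n$ also fails since $g$ is convex and unbounded above near $\lambda=1$.
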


Applying Lemma \ref{lem:lin alg lemma} to equation \eqref{eqn:Jac bound1} and then integrating the result over $y \in Y$ gives 
\begin{equation}\label{eqn:result}
	\frac{\vol(\tilde X, g_0)}{\vol(\tilde Y,F_\Omega)} \leq \left( \frac{h(F_\Omega)}{h(g_0)} \right)^n N(F_\Omega)
\end{equation}
which is equivalent to the inequality statement in Theorem \ref{thm:rigidity}.

%

\subsection{Rigidity}\label{sec:rigidity}

We now turn to the rigidity part of Theorem \ref{thm:rigidity}. Suppose that equality holds in \eqref{eqn:result}. This forces the equality case of Lemma \ref{lem:lin alg lemma}, i.e. 
\[ K = \frac{h(g_0)}{n} I \ \mbox{ and } \ H = \frac{1}{n} I. \]
Then equation \eqref{eqn:CS} gives
\[  \frac{h(g_0)}{n} | \langle D\tilde \Phi(u),w\rangle | \leq \frac{h(F_\Omega)}{n^{1/2}} \Vert w \Vert_{g_0} \Big( \int_{\beta \in \partial \tilde Y} D_yB_{p,\beta}^\Omega(u)^2 d\mu_y(\beta) \Big)^{\frac{1}{2}}\]
for all $u \in T_y\tilde Y$ and $w\in T_{\tilde \Phi(y)}\tilde X$. Solving for $|\langle D\tilde \Phi(u),w\rangle |$ and taking the supremum over all $w \in T^1_{\tilde \Phi(y)} \tilde X$ gives
\[ \Vert D\tilde\Phi(u) \Vert_{g_0} \leq n^{1/2} \frac{h(F_\Omega)}{h(g_0)} \Big( \int_{\beta \in \partial \tilde Y} D_yB_{p,\beta}^\Omega(u)^2 d\mu_y(\beta) \Big)^{\frac{1}{2}} \]
for all $u \in T\tilde Y$.  

Let $L= (D_y\tilde \Phi)^* \circ (D_y\tilde \Phi)$, where $A^*$ denotes the transpose (with respect to $g_r$ and $g_0$) of a linear map $A:T_y\tilde Y \to T_{\tilde \Phi(y)}\tilde X$. Fix a $g_r$-orthonormal basis $\{u_i\}$. We then calculate:
\begin{align}\label{eqn:trace bound}
\begin{split}
	\tr(L) &= \sum_{i=1}^n g_r(L u_i, u_i) \\
		& = \sum_{i=1}^n \langle D_y\tilde \Phi(u_i),D_y\tilde \Phi(u_i) \rangle \\
		& \leq n \left( \frac{h(F_\Omega)}{h(g_0)}\right)^2 \sum_{i=1}^n \int_{\beta \in \partial \tilde Y} D_yB_{p,\beta}^\Omega(u_i)^2 d\mu_y(\beta)  \\
		& \leq n \left( \frac{h(F_\Omega)}{h(g_0)}\right)^2 \max_{v\in S_{g_r}(1,y)} F_\Omega(v)^2
		\end{split}
\end{align}
where we apply the same reasoning following equation \eqref{eqn:maxFOmega2} to the last line. 
Equality in equation \eqref{eqn:Jac bound2} together with $K = \frac{h(g_0)}{n} I$ and $H = \frac{1}{n} I$ implies
\begin{align}
\begin{split}
	\left( \frac{h(F_\Omega)}{h(g_0)} \right)^{2n} \frac{\max_{v\in S_{g_r}(1,y)}
	F_\Omega(v)^{2n}}{\rho(y,g_r)^2} &= |\jac(\tilde \Phi(y))|^2  \\
		& = \frac{\det L}{\rho(y,g_r)^2} \\
		& \leq \frac{1}{\rho(y,g_r)^2} \left(\frac{\tr(L)}{n}\right)^n \\
		& \leq \left( \frac{h(F_\Omega)}{h(g_0)}\right)^{2n} \frac{\max_{v\in S_{g_r}(1,y)} F_\Omega(v)^{2n}}{\rho(y,g_r)^2} 
		\end{split}
\end{align}
using \eqref{eqn:trace bound}. Equality must hold throughout, in particular when we invoke $(\det L)^{1/n} \leq \tr(L)/n$. Equality implies that 
\[ L = \left( \frac{h(F_\Omega)}{h(g_0)}\right)^2 \max_{v\in S_{g_r}(1,y)} F_\Omega(v)^2 I.\]
Recalling the definition of $L$, this implies that for all $y$, $D_y\tilde \Phi: (T_y\tilde Y,F_\Omega) \to (T_{\tilde \Phi(y)\tilde X}, g_0)$ is an isometry composed with a homothety.

We can now conclude the proof of Theorem \ref{thm:rigidity} with a short argument, using special rigidity properties of Hilbert geometries. That $D_y\tilde\Phi$ is a homothety implies that $S_{F_\Omega}(1,y)$ is an ellipsoid; in particular it is smooth. Using the definition of $F_\Omega$, this implies that $\partial \Omega$ is smooth. Then by 
\cite{benzecri}
and our choice of normalization for $d_\Omega$, $(\tilde Y,d_\Omega)$ is in fact hyperbolic, so by Mostow's rigidity theorem \cite{mostow} we conclude $(Y, d_\Omega)$ and $(X, g_0)$ are isometric.

%

\section{Proofs of Theorems \ref{Main} and \ref{thm:entropytozero}}\label{sec:1.1}

Any properly convex domain $\Omega$ in $\mathbb {RP}^n$ admits a Riemannian metric called the Blaschke metric (cf. \cite[definition 2.1]{benoisthulin13}). The Blaschke metric is projectively invariant and agrees with the Hilbert metric if $\Omega$ is an ellipsoid, which is the case when $(\Omega,F_\Omega)$ is isometric to hyperbolic $n$-space \cite[Proposition 1.6]{tholozan}. Let $F^H_\Omega$ and $F^B_\Omega$ denote the Hilbert and Blaschke norms on $\Omega$, respectively.

\begin{theorem}[{\cite[Proposition 3.4]{benoisthulin13}}]\label{thm:benoisthulin}
Given any properly convex domain $\Omega$ in $\mathbb {RP}^n$, there exists a constant $K_n\geq 1$
depending only on $n$ such that for all $v\in T\Omega$, 
\[ \frac1{K_n}F_\Omega^H(v)\leq F_\Omega^B(v)\leq K_n F_\Omega^H(v). \]
\end{theorem}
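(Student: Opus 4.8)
The plan is to use the fact that \emph{both} norms are natural under projective transformations, so that their ratio is a $\pgl(n+1,\R)$-invariant function on the (compact, by Benzécri) space of normalized pointed convex domains. For $g\in\pgl(n+1,\R)$ one has $F^H_{g\Omega}(gx,dg\,v)=F^H_\Omega(x,v)$, which is immediate from the cross-ratio formula of Section~\ref{sec:hilbert} since projective maps preserve cross-ratio, and likewise $F^B_{g\Omega}(gx,dg\,v)=F^B_\Omega(x,v)$, because the Blaschke metric is manufactured from the hyperbolic affine sphere asymptotic to the cone over $\Omega$, a construction equivariant under linear automorphisms of that cone. Hence the ratio $r(\Omega,x,v):=F^B_\Omega(x,v)/F^H_\Omega(x,v)$ is positive, finite, $0$-homogeneous in $v\neq0$, and $\pgl(n+1,\R)$-invariant, so it descends to a function on the space $\mathcal S^\ast$ of triples $(\Omega,x,v)$ with $F^H_\Omega(x,v)=1$ modulo $\pgl(n+1,\R)$.

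Next I would invoke Benzécri's compactness theorem \cite{benzecri}: the action of $\pgl(n+1,\R)$ on the space of pointed properly convex domains $(\Omega,x)$, topologized by Hausdorff convergence in an affine chart, is cocompact. Since each Hilbert unit sphere $\{v:F^H_\Omega(x,v)=1\}$ is compact, $\mathcal S^\ast$ is compact, and it depends only on $n$. It then suffices to verify that $r$ is continuous on $\mathcal S^\ast$: a continuous, everywhere-positive function on a compact space attains a finite maximum and a positive minimum, and taking $K_n$ to be the larger of that maximum and the reciprocal of that minimum gives $\frac1{K_n}F^H_\Omega\le F^B_\Omega\le K_n F^H_\Omega$ with $K_n$ depending only on $n$.

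The continuity of $r$ splits in two. Continuity of $(\Omega,x,v)\mapsto F^H_\Omega(x,v)$ is elementary: under Hausdorff convergence $\Omega_k\to\Omega$ the boundary intersection points $v^\pm$ in the Finsler formula vary continuously, so $F^H$ does. Continuity of the Blaschke norm is the substantive point and is where PDE theory enters: one needs that if $\Omega_k\to\Omega$ in the Hausdorff sense with a fixed interior point, then the associated hyperbolic affine spheres, hence the affine metrics they induce, converge locally smoothly. This follows from the uniqueness together with the interior a priori estimates in Cheng--Yau's solution of the Monge--Amp\`ere equation governing the affine sphere, and it is precisely the step I expect to be the main obstacle; I would either reproduce those estimates or, as the present paper does, cite Benoist--Hulin \cite{benoisthulin13} for it.

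As an alternative one can argue more explicitly through a common hyperbolic yardstick: after a projective normalization (using the invariance of both norms) we may assume $x$ is the center of an ellipsoid $\mathcal E$ with $\mathcal E\subseteq\Omega\subseteq C_n\mathcal E$ for a dimensional constant $C_n$, by John's theorem. At the center, the Hilbert norm (by the monotonicity $d_{\Omega_1}\ge d_{\Omega_2}$ for $\Omega_1\subseteq\Omega_2$) and the Blaschke norm (by the comparison principle for the Monge--Amp\`ere equation defining the affine sphere) are each comparable, up to a factor depending only on $C_n$, to $F^H_{\mathcal E}(x,\cdot)$, the hyperbolic norm of $\mathcal E$ — for which the two metrics coincide. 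Chaining the two comparisons produces the claimed $K_n$. Either way, the heart of the matter is the stability and comparison theory for the affine sphere; everything else is bookkeeping with projective invariance.
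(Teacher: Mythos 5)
The paper does not prove this statement---it is imported verbatim as Proposition 3.4 of Benoist--Hulin \cite{benoisthulin13}---so there is no internal proof to compare against; your argument is, however, essentially the proof given in that source: projective naturality of both the Hilbert and Blaschke norms, Benz\'ecri cocompactness of the space of pointed properly convex domains modulo $\pgl(n+1,\R)$, and continuity of the Blaschke metric under Hausdorff convergence of domains via Cheng--Yau's a priori estimates for the hyperbolic affine sphere. You correctly isolate the one nontrivial analytic input (local smooth convergence of the affine spheres, hence of the induced metrics), which must either be proved from the Monge--Amp\`ere estimates or cited, exactly as the literature does.
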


The eccentricity factor of Definition \ref{defn:distortion} for the Hilbert metric with respect to the Blaschke metric is
\[  N(F_\Omega)= \max_{y\in Y}\max_{v\in S_{F_\Omega^B}(1,y)} \frac{F_\Omega^H(v)^n \vol_{F^B_\Omega}(B_{F_\Omega^H}(1,y))}{\vol_{F^B_\Omega}(B_{F^B_\Omega}(1,y))}. \]

It follows that 
\begin{align}
	N(F_\Omega) \leq K_n^{2n} \label{eqn:benoisthulin}
\end{align}
since, by Theorem \ref{thm:benoisthulin}, $B_{F_\Omega^H}(1,y)\subset B_{F_\Omega^B}(K_n,y)$ and
$S_{F_\Omega^B}(1,y)\subset B_{F_\Omega^H}(K_n,y)$ for all $y\in \Omega$.

\begin{proof}[Proof of Theorem \ref{Main}]
By Theorem \ref{thm:rigidity},
\[ 
	N(F_\Omega^H) h(F_\Omega^H)^n\vol(Y,F_\Omega^H) \geq h(g_0)^n \vol(X,g_0) 
\]
with equality only when $(Y,F_\Omega^H)$ and $(X,g_0)$ are isometric. Moreover, since $(X,g_0)$ has constant curvature $-1$, $h(F_\Omega^H)\leq n-1= h(g_0)$ \cite{crampon09}.  Thus,
\[
	\vol(Y,F_\Omega^H) \geq \frac1{N(F_\Omega^H)}\left(\frac{h(g_0)}{h(F_\Omega^H)}\right)^n \vol(X,g_0) \geq \frac{1}{K_n^{2n}} \vol(X,g_0). 
\]
Theorem \ref{Main} follows.


\end{proof}

Theorem \ref{thm:entropytozero} immediately follows from equation \eqref{eqn:benoisthulin} and Theorem \ref{thm:rigidity} for dimensions $n\geq 3$ since $h(g_0)\vol(X,g_0)$ is constant. We treat the $n=2$ case separately in the next section. 

%

\subsection{Entropy and volume in dimension 2}\label{sec:dim2}

\begin{theorem} \label{thm:volblowsup}
Let $Y_t=\Omega_t/\Gamma_t$ be a family of convex projective manifolds homeomorphic to a closed surface $\Sigma$ of negative Euler characteristic. Then
\[ h(F_{\Omega_t}^H) \to 0 \ \Rightarrow \ \vol(Y_t, F_{\Omega_t}^H) \to \infty.\]
\end{theorem}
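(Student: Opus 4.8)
The plan is to reduce the dimension-two case to Katok's entropy rigidity theorem for surfaces by passing to the Blaschke metric, which in dimension two has especially favorable properties thanks to Benoist--Hulin \cite{benoisthulin14}. First I would recall that for a closed surface $\Sigma$ of negative Euler characteristic, the convex projective structure $Y_t = \Omega_t/\Gamma_t$ carries the Blaschke metric $g^B_t$, a Riemannian metric in the conformal class determined by $\Omega_t$; in dimension two Benoist--Hulin show this metric has negative curvature bounded between two universal constants (or at least that its curvature is pinched in a way depending only on the topology), and in particular it is a negatively curved Riemannian metric on $\Sigma$. Katok's theorem \cite{katok_entropy} then says that among all Riemannian metrics on $\Sigma$ of a fixed area, the hyperbolic metric maximizes the normalized entropy; equivalently, $h(g)^2 \vol(\Sigma, g) \geq h(g_0)^2 \vol(\Sigma, g_0) = 2\pi|\chi(\Sigma)|$ for every Riemannian metric $g$ on $\Sigma$, since $h(g_0) = 1$ and $\vol(\Sigma, g_0) = 2\pi|\chi(\Sigma)|$ by Gauss--Bonnet.

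The key steps, in order, are: (1) Apply Katok's inequality to the Blaschke metric $g^B_t$ to get $h(g^B_t)^2 \vol(Y_t, g^B_t) \geq 2\pi|\chi(\Sigma)|$. (2) Use Theorem \ref{thm:benoisthulin} (the Benoist--Hulin comparison $\frac{1}{K_2} F^H \leq F^B \leq K_2 F^H$, valid in all dimensions) to compare volumes: since the Blaschke norm and Hilbert norm are $K_2$-bi-Lipschitz, their area forms differ by a factor of at most $K_2^2$, so $\vol(Y_t, g^B_t) \leq K_2^2 \vol(Y_t, F^H_{\Omega_t})$. (3) Similarly compare the entropies: a $K_2$-bi-Lipschitz change of metric changes volume-growth entropy by at most a factor of $K_2$, so $h(g^B_t) \leq K_2 \, h(F^H_{\Omega_t})$. (4) Combine: $2\pi|\chi(\Sigma)| \leq h(g^B_t)^2 \vol(Y_t, g^B_t) \leq K_2^2 h(F^H_{\Omega_t})^2 \cdot K_2^2 \vol(Y_t, F^H_{\Omega_t}) = K_2^4\, h(F^H_{\Omega_t})^2\, \vol(Y_t, F^H_{\Omega_t})$, hence
\[
\vol(Y_t, F^H_{\Omega_t}) \geq \frac{2\pi|\chi(\Sigma)|}{K_2^4 \, h(F^H_{\Omega_t})^2}.
\]
(5) Let $h(F^H_{\Omega_t}) \to 0$; the right-hand side blows up, giving the conclusion.

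I expect the main obstacle to be step (3): bounding entropy from below by the Hilbert entropy, i.e. verifying that volume-growth entropy is monotone (up to a multiplicative constant) under a bi-Lipschitz change of metric. This is essentially immediate — a $K$-bi-Lipschitz map between metrics sends balls of radius $R$ to sets sandwiched between balls of radii $R/K$ and $KR$, so $h$ changes by at most a factor $K$ — but one must be slightly careful that the Blaschke metric is genuinely a length metric comparable to the Hilbert Finsler metric on the universal cover, uniformly in $t$, which is exactly what Theorem \ref{thm:benoisthulin} provides since $K_n$ depends only on dimension. A secondary point to check is that Katok's theorem as originally stated applies to Riemannian metrics, so one genuinely needs the negative curvature (or at least Riemannian smoothness) of the Blaschke metric in dimension two from \cite{benoisthulin14}, rather than attempting to apply it to the Finsler metric $F^H_{\Omega_t}$ directly. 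An alternative, should the constants be inconvenient, is to apply the dimension-two analogue of the Besson--Courtois--Gallot/Katok bound directly to $g^B_t$ and note that the entropy comparison is all that is needed; but the route above is the shortest.
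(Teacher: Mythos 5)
Your proposal is correct and follows essentially the same route as the paper: apply Katok's Theorem B to the Blaschke metric, then transfer entropy and volume back to the Hilbert metric via the Benoist--Hulin comparison, arriving at the same $K_2^4$ constant. The one small imprecision is that Benoist--Hulin give curvature pinched between $-1$ and $0$ (nonpositive, not strictly negative), which is exactly the hypothesis the paper uses to invoke Manning's theorem equating volume-growth entropy with the topological entropy to which Katok's inequality applies.
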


The approach is comparison with the Riemannian Blascke metric $d_B$ on $\Omega$ via Theorem \ref{thm:benoisthulin}, and an application of Katok's entropy-rigidity theorem \cite[Theorem B]{katok_entropy}.

In this section, we write $\vol_{d_\Omega^\ast}$ for the volumes \emph{on $\Omega$} induced by the two metrics $F_\Omega^\ast$, and we write $\vol_{F_\Omega^\ast}$ for the volumes these metrics induce \emph{on the tangent space at a point}.

\begin{lemma}  \label{lem:comparisons}
There is a constant $V_n$ depending only on dimension such that for any measurable set $U\subset \Omega$, 
\begin{equation}
	\frac1{V_n}\vol_{d_\Omega^B}(U)\leq \vol_{d_\Omega^H}(U)\leq V_n\vol_{d_\Omega^B}(U). \label{eqn:volumes}
\end{equation}
Moreover, we have 
\begin{equation}
	h(F_\Omega^B) \leq K_n h(F_\Omega^H) \label{eqn:entropies}
\end{equation}
where $K_n$ is as in Theorem \ref{thm:benoisthulin}. 
\end{lemma}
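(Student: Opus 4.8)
The plan is to prove the two assertions separately, deriving each from the pointwise norm comparison of Theorem~\ref{thm:benoisthulin}, which I rewrite as $\frac{1}{K_n}F_\Omega^B(v)\leq F_\Omega^H(v)\leq K_n F_\Omega^B(v)$ for all $v\in T\Omega$.

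For the volume comparison \eqref{eqn:volumes}, I would compute the Hilbert (Finsler) volume density against the Blaschke (Riemannian) volume form by taking the reference metric $g$ in the definition $dF_\Omega^H(y)=\frac{\vol_g(B_g(1,y))}{\vol_g(B_{F_\Omega^H}(1,y))}dg$ to be the Blaschke metric itself, so that $dg=d\vol_{d_\Omega^B}$ and
\[\frac{d\vol_{d_\Omega^H}}{d\vol_{d_\Omega^B}}(y)=\frac{\vol_{F_\Omega^B}(B_{F_\Omega^B}(1,y))}{\vol_{F_\Omega^B}(B_{F_\Omega^H}(1,y))}.\]
The norm comparison gives the nested inclusions $B_{F_\Omega^B}(\tfrac{1}{K_n},y)\subset B_{F_\Omega^H}(1,y)\subset B_{F_\Omega^B}(K_n,y)$ in each tangent space; since $F_\Omega^B$ is Riemannian, $B_{F_\Omega^B}(r,y)$ is a Euclidean $r$-ball in orthonormal coordinates, so its $F_\Omega^B$-volume is $r^n\omega_n$ with $\omega_n=\vol_{F_\Omega^B}(B_{F_\Omega^B}(1,y))$ a dimensional constant. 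These inclusions then force $K_n^{-n}\leq \frac{d\vol_{d_\Omega^H}}{d\vol_{d_\Omega^B}}(y)\leq K_n^n$ pointwise, and setting $V_n=K_n^n$ and integrating over $U$ yields \eqref{eqn:volumes}.

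For the entropy inequality \eqref{eqn:entropies}, I would first integrate the norm comparison along paths to get the distance comparison $\frac{1}{K_n}d_\Omega^B(x,z)\leq d_\Omega^H(x,z)\leq K_n d_\Omega^B(x,z)$, which gives the metric-ball inclusion $B_{d_\Omega^B}(R,y)\subset B_{d_\Omega^H}(K_n R,y)$. Combining this with the volume comparison \eqref{eqn:volumes} and monotonicity of volume gives
\[\vol_{d_\Omega^B}(B_{d_\Omega^B}(R,y))\leq V_n\,\vol_{d_\Omega^H}(B_{d_\Omega^B}(R,y))\leq V_n\,\vol_{d_\Omega^H}(B_{d_\Omega^H}(K_n R,y)).\]
Applying $\frac{1}{R}\log$ and letting $R\to\infty$, the term $\frac{1}{R}\log V_n$ vanishes while the substitution $R'=K_n R$ turns the last term into $K_n h(F_\Omega^H)$, establishing $h(F_\Omega^B)\leq K_n h(F_\Omega^H)$.

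The point requiring the most care is to keep the two comparisons distinct: the volume forms differ only by the bounded factor $V_n$, which is subexponential and hence invisible to the entropy limit, whereas the \emph{distances} differ by $K_n$, which rescales the ball radius and is exactly what survives as the multiplicative constant in \eqref{eqn:entropies}. Getting the direction of every inclusion and inequality consistent---so that the final constant lands on the Hilbert entropy with the correct sign---is the only real subtlety; all the comparisons are pointwise statements on $T\Omega$ and so pass without change to the universal cover $\tilde Y=\Omega$ on which the entropy is computed.
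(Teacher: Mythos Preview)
Your proof is correct and follows essentially the same approach as the paper: both use the Blaschke metric as the reference $g$ in the Finsler volume density to obtain the pointwise bound $K_n^{-n}\leq \tfrac{d\vol_{d_\Omega^H}}{d\vol_{d_\Omega^B}}\leq K_n^n$ (hence $V_n=K_n^n$), and both combine the metric-ball inclusion $B_{d_\Omega^B}(R,y)\subset B_{d_\Omega^H}(K_nR,y)$ with \eqref{eqn:volumes} before taking the exponential growth limit. Your write-up is slightly more explicit in separating the subexponential volume-form factor from the radius rescaling, but the argument is the same.
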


\begin{proof}
Since the Blaschke metric is Riemannian we may use it for the definition of the Finsler volume. Then 
\[ \vol_{d_\Omega^H}(U)=\int_U \frac{\vol_{F_\Omega^B}(B_{F_\Omega^B}(1,x))}{\vol_{F_\Omega^B}(B_{F_\Omega^H}(1,x))} d\vol_{F_\Omega^B}(x). \]
By Theorem \ref{thm:benoisthulin} and basic properties of any volume form on $T_x\Omega$, 
\begin{align*}
	\frac1{K^n_n} \vol_{F_\Omega^B}(B_{F_\Omega^B}(1,x)) =   \vol_{F_\Omega^B}(B_{F_\Omega^B}(\frac{1}{K_n},x)) \leq \vol_{F_\Omega^B}(B_{F_\Omega^H}(1,x))   \\ 
	\leq \vol_{F_\Omega^B}(B_{F_\Omega^B}(K_n,x))=K^n_n\vol_{F_\Omega^B}(B_{F_\Omega^B}(1,x)).
\end{align*}
Equation \eqref{eqn:volumes} follows with $V_n=K_n^n$.

By Theorem \ref{thm:benoisthulin} and equation \eqref{eqn:volumes},
\begin{align*}
	h(F_\Omega^F) & = \lim_{r\to\infty}\frac{1}{r}\log \vol_{d_\Omega^B}\left(B_{d_\Omega^B}(r,x)\right) \\ 
				& \leq \lim_{r\to\infty}\frac1r\log \vol_{d_\Omega^B}\left(B_{d_\Omega^H}(K_n r,x)\right) \\
    				& \leq K_n \lim_{r\to\infty}\frac{1}{K_n r} \log \vol_{d_\Omega^H}\left(B_{d_\Omega^H}(K_nr,x)\right) = K_n h(F_\Omega^H).
\end{align*}
This proves equation \eqref{eqn:entropies}.
\end{proof}

The last piece needed to prove Theorem \ref{thm:volblowsup} is another result of Benoist and Hulin:

\begin{lemma}[{\cite[Proposition 3.3]{benoisthulin14}}] \label{fct:curvature}
The curvature of the Blaschke metric on a properly convex $\Omega\subset \mathbb R\mathrm P^2$ is bounded between $-1$ and 0. 
\end{lemma}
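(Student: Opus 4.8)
The statement is classical in affine differential geometry, and I would prove it by realizing the Blaschke metric as the affine metric of the Cheng--Yau hyperbolic affine sphere and then exploiting the special features of dimension two. Let $\mathcal C\subset\R^3$ be the cone over $\Omega\subset\rp^2$. By the Calabi/Cheng--Yau theory there is a unique complete hyperbolic affine sphere $\mathcal H$ asymptotic to $\partial\mathcal C$, with affine mean curvature normalized to $-1$; it is a graph over $\Omega$, and the Blaschke metric $g^B$ is by definition its affine (second fundamental) metric. The first step is to record the structure equations of $\mathcal H$: the Levi-Civita connection of $g^B$ differs from the induced flat connection by the totally symmetric Pick (cubic) form $C$, which satisfies the apolarity (trace-free) condition, and the Gauss equation expresses the curvature of $g^B$ through the affine mean curvature $-1$ and $C$.

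The second step uses $n=2$. Choosing a local conformal (isothermal) coordinate $z$ for $g^B$, I would write $g^B=e^{u}\lvert dz\rvert^2$ and encode the Pick form as the real part of a cubic differential $U\,dz^3$. The Codazzi equation together with constancy of the affine mean curvature forces $U$ to be holomorphic; this is the dimension-two miracle, with no analogue in higher dimensions. In these terms the Gauss equation becomes Tzitz\'eica's equation (for a suitable normalization of the affine normal)
\[ u_{z\bar z}=\tfrac12 e^{u}-\lvert U\rvert^2 e^{-2u}, \]
and since the Gauss curvature of $g^B$ is $\kappa=-2e^{-u}u_{z\bar z}$, I obtain
\[ \kappa=-1+w,\qquad w:=2\lvert U\rvert^2 e^{-3u}, \]
where $w$ is a positive multiple of $\lVert C\rVert^2_{g^B}\geq 0$. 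The lower bound $\kappa\geq -1$ is now immediate, with equality exactly when $U\equiv 0$, i.e.\ when $\partial\Omega$ is an ellipsoid.

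For the upper bound $\kappa\leq 0$, equivalently $w\leq 1$, I would run a maximum principle on $\log w$. Since $U$ is holomorphic, $\log\lvert U\rvert^2$ is flat-harmonic away from the isolated zeros of $U$; differentiating $\log w=\log\lvert U\rvert^2-3u+\mathrm{const}$ and substituting Tzitz\'eica's equation yields the clean identity
\[ \Delta_{g^B}\log w = 6\,(w-1) \]
on $\{U\neq 0\}$. At an interior maximum of $w$ one has $\Delta_{g^B}\log w\leq 0$, forcing $w\leq 1$ there, hence everywhere; near a zero of $U$ we have $w\to 0$ and $\kappa\to -1\leq 0$. When $\Gamma$ acts cocompactly the projectively natural function $w$ descends to the compact quotient and the maximum is attained directly. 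The triangle, whose affine sphere $\{xyz=\mathrm{const}\}$ is flat, shows $\kappa=0$ is sharp, while the ellipse gives the sharp value $\kappa=-1$.

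The main obstacle is making the maximum-principle step unconditional for an \emph{arbitrary} properly convex $\Omega$, where no cocompact group is assumed and $w$ need not attain its supremum. Here I would invoke completeness of the Blaschke metric (part of the Cheng--Yau package) together with the lower bound $\kappa\geq -1$ from the previous step, which bounds the Ricci curvature of $g^B$ below, to apply an Omori--Yau/Cheng--Yau-type maximum principle: the identity $\Delta_{g^B}\log w = 6(w-1)$ then produces an approximate maximizing sequence along which $w\to\sup w$ and $\Delta_{g^B}\log w\leq o(1)$, again yielding $\sup w\leq 1$. Verifying the hypotheses on a complete, noncompact manifold (in particular handling the potential unboundedness of $\log w$ by a bounded comparison function, as in Cheng--Yau's a priori estimates for the Pick form) and pinning down the normalization constants in Tzitz\'eica's equation are the two points requiring genuine care; everything else is a direct consequence of the dimension-two structure equations.
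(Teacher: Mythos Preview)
The paper does not prove this lemma at all: it is quoted verbatim from Benoist--Hulin \cite{benoisthulin14} and used as a black box, so there is no ``paper's own proof'' to compare your attempt against. Your sketch is correct and is essentially the argument Benoist--Hulin give (and, before them, Calabi and Wang): realize the Blaschke metric via the Cheng--Yau affine sphere, use the dimension-two reduction to a holomorphic cubic differential and Tzitz\'eica's equation, read off $\kappa=-1+2\lvert U\rvert^2 e^{-3u}$ for the immediate lower bound, and run a maximum principle on $\log w$ with the identity $\Delta_{g^B}\log w = 6(w-1)$ for the upper bound. Your caveat about needing an Omori--Yau type argument in the noncocompact case is exactly the point Benoist--Hulin address; for the application in this paper only the cocompact case is needed, where the maximum is attained and the argument is clean.
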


By \cite{manning}, this implies that topological entropy of the Blaschke geodesic flow of the quotient manifold is
equal to the volume growth entropy of the Blascke metric. 

\begin{proof}[Proof of Theorem \ref{thm:volblowsup}]
Suppose $h(F_{\Omega_t}^H) \to 0$. By equation \eqref{eqn:entropies}, $h(F_{\Omega_t}^B) \to 0$ as well. By Theorem B in \cite{katok_entropy}, 
\[ (h(F_{\Omega_t}^B))^2\vol_{d_\Omega^B}(Y_t) \geq -2\pi E(\Sigma) \]
where $E(\Sigma)$ is the Euler characteristic of $\Sigma$. Therefore, $\vol_{d_\Omega^B}(Y_t) \to \infty$ and $\vol_{d_\Omega^H}(Y_t)\to\infty$ as well by equation \eqref{eqn:volumes}. 
\end{proof}

%

\bibliographystyle{alpha}
\bibliography{biblio}

\end{document}